\newtheorem{theorem}{Theorem}
\newtheorem{remark}{Remark}
\newtheorem{definition}{Definition}
\newcolumntype{Y}{>{\centering\arraybackslash}X}
\DeclareMathOperator*{\argmin}{arg\,min}
\theoremstyle{definition}
\theoremstyle{Theorem}
\theoremstyle{Theorem}
\theoremstyle{Theorem}
\theoremstyle{definition}
\begin{document}
	
	\title{Optimally weighted loss functions for solving PDEs with Neural Networks}
	\author{Remco van der Meer}
	\affiliation{CWI, Science Park 123, 1098 XG Amsterdam, Netherlands}	
	\affiliation{Delft University of Technology, Van Mourik Broekmanweg 6, 2628 XE Delft, Netherlands}	
	\author{Cornelis Oosterlee}
	\affiliation{CWI, Science Park 123, 1098 XG Amsterdam, Netherlands}
	\affiliation{Delft University of Technology, Van Mourik Broekmanweg 6, 2628 XE Delft, Netherlands}
	\author{Anastasia Borovykh}
	\affiliation{CWI, Science Park 123, 1098 XG Amsterdam, Netherlands}
	\affiliation{Department of Computing, Imperial College London, London SW7 2AZ, UK}
	
	\begin{abstract}
	Recent works have shown that deep neural networks can be employed to solve partial differential equations, giving rise to the framework of physics informed neural \mbox{networks \cite{PhysInformedI}}. We introduce a generalization for these methods that manifests as a scaling parameter which balances the relative importance of the different constraints imposed by partial differential equations. A mathematical motivation of these generalized methods is provided, which shows that for linear and well-posed partial differential equations, the functional form is convex. We then derive a choice for the scaling parameter that is optimal with respect to a measure of relative error. Because this optimal choice relies on having full knowledge of analytical solutions, we also propose a heuristic method to approximate this optimal choice. The proposed methods are compared numerically to the original methods on a variety of model partial differential equations, with the number of data points being updated adaptively. For several problems, including high-dimensional PDEs the proposed methods are shown to significantly enhance accuracy.\\
	\end{abstract}
	\maketitle
	
\section{Introduction}

Advances in computing power and rapid growth of available data in recent years have invigorated the field of machine learning and data science. Although the theory to train deep learning models has been available since the early 60's, only recently has it become possible to train them on commonly available hardware \cite{2014arXiv1404.7828S}. This has led to exceptional achievements in a wide range of problems, including image recognition, natural language processing, genomics and reinforcement learning. Aside from these empirical achievements, the theoretical understanding of such methods is also advancing rapidly. This has sparked interest in solving more fundamental problems by utilizing these methods. 

Previous work has shown the successes of Bayesian approaches for learning partial differential equation (PDE) representations in linear settings \cite{owhadi2015bayesian}, \cite{raissi2017machine}; extensions to nonlinear regimes introduced certain limitations \cite{owhadi2015brittleness}, \cite{raissi2018numerical}. Motivated by the universal approximation theorems \cite{hornik1989multilayer, hornik1990universal}, recent studies have considered a different approach for solving (non)linear PDEs utilizing deep neural networks \cite{1997physics...5023L, 2018JCoPh.375.1339S, PhysInformedI, 2019arXiv190407200D, DNN_FEM, han2018solving, chan2019machine}. Specifically, the works of \cite{2018JCoPh.375.1339S, PhysInformedI} minimize a loss function which represents the PDE constraints, while the work of \cite{han2018solving} uses the backward stochastic differential equation (BSDE) representation. The work of \cite{chan2019machine} studies the performance of different network architectures. These studies present several different advantages that neural network based methods have over classical numerical methods. The most notable advantage mentioned is that neural network based methods do not require any form of discretization. Constructing a mesh can be especially prohibitive for high-dimensional problems, problems with complex geometries, or domains with tiny structures. By alleviating this requirement, neural network based methods may have great potential to outperform existing methods in these areas. As shown in e.g. \cite{grohs2018proof} \cite{berner2020analysis} \cite{jentzen2018proof}, \cite{darbon2020overcoming} neural networks can provably overcome the curse of dimensionality in certain classes of PDEs. The study of \cite{1997physics...5023L} mentions additional advantages, which include the differentiability of the obtained solution and the possibility to efficiently parallelize the methods. While neural networks have been succesful in solving a variety of PDEs, certain challenges remain. As mentioned in e.g. \cite{wang2020understanding}  neural networks can suffer from unbalanced back-propagated gradients. Alternatively, when the PDE is only partially known and data-collection is expensive, it is not trivial how to employ neural networks; the work of \cite{chakraborty2021transfer} proposes transfer learning from low- to high-fidelity models as a solution.

The key concept brought forward in these studies is to approximate (part of) the solution of the PDE one aims to solve with a deep neural network. Consider a general PDE for the scalar function $u(\bm{x}): \mathbb{R}^d \rightarrow \mathbb{R}$ on the domain $\Omega\subset\mathbb{R}^d$ given by
\begin{align}
\begin{cases}
\begin{aligned}
N(\bm{x}, u) &= F(\bm{x}) &&\text{in } \Omega,\\
B(\bm{x}, u) &= G(\bm{x}) &&\text{on } \partial\Omega,
\end{aligned}
\end{cases}\label{eq:pde_def_1}
\end{align}
with $\bm{x}\in\Omega\subset\mathbb{R}^d$ and $N$ and $B$ the differential operators on the interior and boundary, resp., and $F$, $G$ are source functions. These operators and functions define constraints on $u$ that must be satisfied to solve the PDE. The neural network-based methods employ a deep neural network of which the input layer has $d$ neurons and the output layer has a single neuron representing the entire solution of the PDE. To solve the PDE, the neural network must discover solutions that satisfy the constraints imposed by the PDE and the boundary conditions. This discovery is done without using any explicit information about the true solution; only the PDE and the boundary conditions that must be satisfied are provided. Therefore, these methods can be categorized as unsupervised learning methods. 

There are different ways to satisfy the constraints imposed by the PDE and the boundary conditions. The study of \cite{1997physics...5023L} treats the boundary conditions of the PDE as a hard constraint by constructing an auxiliary function that satisfies the boundary conditions. The remaining constraint that is imposed by the PDE itself is treated as a soft constraint, which is approximately satisfied by minimizing some loss function. The studies \cite{2018JCoPh.375.1339S, PhysInformedI, 2019arXiv190407200D} use a different approach and treat all the constraints as soft constraints. This is implemented by constructing a single loss function in which all of these constraints are included. These approaches are more general, as they do not require special functions that depend on the problem one aims to solve. The result is that the neural network directly serves as the approximation of the entire solution; feeding a location\,$\bm{x}$ into the neural network results in an output $\hat{u}(\bm{x})$ that approximates the true \mbox{solution $u(\bm{x})$}. 

To solve the general PDE given in eq. \ref{eq:pde_def_1}, the authors of \cite{PhysInformedI} suggest rewriting the PDE into the form
\begin{align}
\begin{cases}
\begin{aligned}
\mathcal{N}(\bm{x}, u) &:= N(\bm{x}, u) - F(\bm{x}) = 0 &&\text{in } \Omega,\\
\mathcal{B}(\bm{x}, u) &:= B(\bm{x}, u) - G(\bm{x}) = 0 &&\text{on } \partial\Omega,
\end{aligned}
\end{cases}\label{eq:pde_def_1_rewritten}
\end{align}
and minimizing the loss functional
\begin{equation}
	L(\hat{u}) = \frac{1}{n_I}\sum_{i=1}^{n_I} \mathcal{N}(\bm{x}_i^I, \hat{u})^2 + \frac{1}{n_B}\sum_{i=1}^{n_B} \mathcal{B}(\bm{x}_i^B, \hat{u})^2, \label{eq:loss_def_1}
\end{equation}
with a deep neural network. Here, $x_i^I \in \Omega$ and $x_i^B \in \partial\Omega$ are collocation points. The authors of \cite{2018JCoPh.375.1339S, PhysInformedI, 2019arXiv190407200D} provide empirical evidence that for several instances of well-posed problems, e.g. the Burgers' equation and the Poisson equation, these methods yield accurate results. Although these empirical results are promising, it is still poorly understood why or when these methods work. The authors of \cite{PhysInformedI} suggest that well-posedness may be what determines whether or not these methods work. However, in Section \ref{sec:results} we show that instances of well-posed PDEs for which minimizing \mbox{eq. \ref{eq:loss_def_1}} leads to inaccurate approximations are not uncommon.

The main aim of this work is to augment the generality of the methods introduced in \cite{PhysInformedI} by extending the class of problems that may be solved with these methods. To this end, we aim to address the aforementioned problems. Herein our focus will be on research around the neural network engine; The scientific community has spent decades developing sophisticated training algorithms, and neural network training techniques are expected to improve further. Our focus is on translating the PDEs to a minimization problem that is well suited for neural networks to solve. 

In Section \ref{sec:motivation}, we give a mathematical motivation for the original methods that were introduced in \cite{PhysInformedI}. Here, we show that in the asymptotic regime of large neural networks, these methods should indeed be able to solve linear, well-posed problems. Then, Section \ref{sec:modification} introduces a scaling parameter which generalizes these methods. This parameter is optimized with respect to a measure of relative error. Section \ref{sec:normalization} proposes a heuristic method to perform this optimization which does not require the explicit knowledge of the analytical solution. These theoretical results are analyzed numerically in Section \ref{sec:results}. Here, several linear well-posed model PDEs such as the Laplace equation and the convection-diffusion equation are considered. Instances of these PDEs for which the original methods worked well and instances for which they do not are both used to examine the impact of the proposed generalization. 

\section{Method Motivation} \label{sec:motivation}

This section aims to provide a mathematical motivation for the use of the methods introduced in \cite{PhysInformedI}. This is done by showing that the loss function of eq. \ref{eq:loss_def_1} can be viewed as a Monte-Carlo approximation of a functional, and subsequently by showing that this loss functional satisfies several highly desirable properties for well-posed linear PDEs. Although these properties are challenging to exploit directly, they can be used to justify the use of approximate methods to solve such problems. 
Loss functionals are challenging to compute exactly and therefore rarely used in practice for the purpose of training neural networks. However, loss functionals are significantly easier to analyze mathematically; by focusing on them instead of their Monte-Carlo counterparts, one can avoid all problems related to the distribution of the collocation points. To emphasize the difference, loss functionals are denoted with a hat throughout this work, while their Monte-Carlo approximations are denoted without one. 

For the sake of generality, the analysis is performed on a generalized loss functional. Instead of considering the mean squared error, i.e. the second power of the $L^2$ norm, the $p$-th power of the $L^p$ norm is considered for $p\geq 1$. The resulting loss functional is given by
\begin{equation}
\hat{L}(\hat{u}) = \frac{1}{|\Omega|}\int_\Omega \left|\mathcal{N}(\bm{x}, \hat{u})\right|^p d\bm{x} + \frac{1}{|\partial\Omega|}\int_{\partial\Omega} \left|\mathcal{B}(\bm{x}, \hat{u})\right|^p d\bm{x}_\Gamma. \label{eq:loss_def_3}
\end{equation}
Note that for $p=2$ the Monte-Carlo approximation of this functional is given by eq. \ref{eq:loss_def_1}. For notational convenience, the constants present in eq. \ref{eq:loss_def_3} are replaced by the constants $0 < c_1,c_2 \in \mathbb{R}$. This generalization leads to the loss functional
\begin{equation}
\hat{L}(\hat{u}) = c_1\int_\Omega \left|\mathcal{N}(\bm{x}, \hat{u})\right|^p d\bm{x} + c_2\int_{\partial\Omega} \left|\mathcal{B}(\bm{x}, \hat{u})\right|^p d\bm{x}_\Gamma. \label{eq:loss_def_4}
\end{equation}
The integrals present in eqs. \ref{eq:loss_def_3} and \ref{eq:loss_def_4} are labeled as
\begin{align}
\hat{L}_I(\hat{u}) &:= \int_\Omega \left|\mathcal{N}(\bm{x}, \hat{u})\right|^p d\bm{x} \label{eq:interiorLoss}\\
\hat{L}_B(\hat{u}) &:= \int_{\partial\Omega} \left|\mathcal{B}(\bm{x}, \hat{u})\right|^p  d\bm{x}_\Gamma, \label{eq:boundaryLoss}
\end{align}
and are referred to as the interior and boundary loss, respectively.

The functional given in eq. \ref{eq:loss_def_3} has a much clearer correspondence to the PDE given in eq. \ref{eq:pde_def_1} than the Monte-Carlo approximation. Since the loss functional is positive, and only zero when $\mathcal{N}$ and $\mathcal{B}$ are zero everywhere, its global minimizer coincides with the solution of the PDE. This is a property that the Monte-Carlo approximation of the loss functional lacks, since minimizing that loss function only ensures that the PDE is satisfied on a finite number of collocation points. 

Despite this, minimizing the loss function of eq. \ref{eq:loss_def_1} to solve a PDE can still be motivated with the properties of the loss functional of eq. \ref{eq:loss_def_3} for well-posed PDEs. Definition \ref{def:well-posed} gives the formal definition of well-posedness as given in \cite{Wesseling}.
\begin{definition}
	\label{def:well-posed}
	Consider a PDE of the form 
	\begin{align}
	\begin{cases}
	\begin{aligned}
	\mathcal{N}(\bm{x}, u) := N(\bm{x}, u) - F(\bm{x}) &= 0 &&\text{in } \Omega,\\
	\mathcal{B}(\bm{x}, u) := B(\bm{x}, u) - G(\bm{x}) &= 0 &&\text{on } \partial\Omega,
	\end{aligned}
	\end{cases}
	\end{align}
	on the finite and sufficiently smooth domain $\Omega$, with $N$, $B$ the operators that define the PDE, and $F$, $G$ source functions. Such a PDE is called well-posed if for all $F, G$ there exists a unique solution, and if for every two sets of data $F_1, G_1$ and $F_2, G_2$, the corresponding solutions $u_1$ and $u_2$ satisfy
	\begin{equation}
	\|u_1 - u_2\| \leq C \{\|F_1 - F_2\| + \|G_1 - G_2\|\}
	\end{equation}
	for some fixed, finite constant $C\in\mathbb{R}$. Such a constant $C$ will be referred to as the Lipschitz constant of the PDE. Here, $\|\cdot\|$ denotes the $L^1$ norm. 
\end{definition}

Motivating the use of the Monte-Carlo approximation of the loss functional of eq. \ref{eq:loss_def_3} requires a statement regarding the approximate optimization of these functionals; so far, it is only clear that minimizing the loss functional exactly yields a solution of the PDE. However, when one minimizes the Monte-Carlo approximation instead, it is highly unlikely that this exact minimum is attained. Even if one were able to find a solution for which the approximated loss function zeroes out completely, the loss functional would likely remain nonzero. The following theorem bridges the gap between this approximate and exact optimization of the loss functionals. 

\begin{theorem}
	\label{thm:defaultconsistent}
	Consider the well-posed PDE of order $k$ given by
	\begin{equation}
	\begin{cases}
	\begin{aligned}
	\mathcal{N}(\bm{x}, u) := N(\bm{x}, u) - F(\bm{x}) &= 0 &&\text{in } \Omega,\\
	\mathcal{B}(\bm{x}, u) := B(\bm{x}, u) - G(\bm{x}) &= 0 &&\text{on } \partial\Omega.
	\end{aligned}
	\end{cases} \label{eq:pde_def_2}
	\end{equation}
	Let the exact solution of this PDE be given by $u$ and let the loss functional be given by eq. \ref{eq:loss_def_4} for some fixed $p\geq 1$ and $c_1, c_2 > 0$. Consider some approximate solution $\hat{u}$ of which the first $k$ (partial) derivatives exist and have finite $L^p$ norm. Then, for any $\epsilon > 0$ there exists a $\delta > 0$ such that for the approximate solution $\hat{u}$,
	\begin{equation}
	\hat{L}(\hat{u}) < \delta \implies \|\hat{u} - u\| < \epsilon.\label{eq:thmdefaultconsistent}
	\end{equation}
\end{theorem}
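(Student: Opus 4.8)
The plan is to exploit the well-posedness estimate of Definition \ref{def:well-posed} by reinterpreting the approximate solution $\hat{u}$ as the \emph{exact} solution of a perturbed PDE. Concretely, I would introduce the perturbed source functions $\tilde{F}(\bm{x}) := N(\bm{x}, \hat{u})$ on $\Omega$ and $\tilde{G}(\bm{x}) := B(\bm{x}, \hat{u})$ on $\partial\Omega$. These are well-defined and lie in $L^p$ precisely because the first $k$ derivatives of $\hat{u}$ exist and have finite $L^p$ norm by hypothesis, and $N, B$ are of order at most $k$. By construction $\hat{u}$ solves the PDE of eq. \ref{eq:pde_def_2} with data $(\tilde{F}, \tilde{G})$, and since that PDE is well-posed its solution is unique, so $\hat{u}$ is identified with it and the stability estimate applies to the pair $u$ and $\hat{u}$.

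Applying that estimate with data $(F, G)$ and $(\tilde{F}, \tilde{G})$, and using that $\mathcal{N} = N - F$, $\mathcal{B} = B - G$ together with the fact that $\|\cdot\|$ is the $L^1$ norm, yields
\[
\|\hat{u} - u\| \leq C\left\{ \|\tilde{F} - F\| + \|\tilde{G} - G\| \right\} = C\left\{ \int_\Omega |\mathcal{N}(\bm{x}, \hat{u})| \, d\bm{x} + \int_{\partial\Omega} |\mathcal{B}(\bm{x}, \hat{u})| \, d\bm{x}_\Gamma \right\}.
\]
This reduces the task to bounding the $L^1$ norms of $\mathcal{N}(\cdot, \hat{u})$ and $\mathcal{B}(\cdot, \hat{u})$ by the loss functional, which is built from $L^p$ quantities.

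Next I would pass from $L^1$ to $L^p$ via Hölder's inequality, available because $\Omega$ (hence $\partial\Omega$) is finite: $\int_\Omega |\mathcal{N}(\bm{x}, \hat{u})| \, d\bm{x} \leq |\Omega|^{1 - 1/p} \left( \hat{L}_I(\hat{u}) \right)^{1/p}$, and analogously $\int_{\partial\Omega} |\mathcal{B}(\bm{x}, \hat{u})| \, d\bm{x}_\Gamma \leq |\partial\Omega|^{1 - 1/p} \left( \hat{L}_B(\hat{u}) \right)^{1/p}$. Since the two nonnegative terms of $\hat{L}(\hat{u}) = c_1 \hat{L}_I(\hat{u}) + c_2 \hat{L}_B(\hat{u})$ are each dominated by $\hat{L}(\hat{u})$, the hypothesis $\hat{L}(\hat{u}) < \delta$ gives $\hat{L}_I(\hat{u}) < \delta/c_1$ and $\hat{L}_B(\hat{u}) < \delta/c_2$. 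Substituting produces
\[
\|\hat{u} - u\| \leq C \left\{ |\Omega|^{1 - 1/p} \left( \tfrac{\delta}{c_1} \right)^{1/p} + |\partial\Omega|^{1 - 1/p} \left( \tfrac{\delta}{c_2} \right)^{1/p} \right\},
\]
which tends to $0$ as $\delta \to 0$. To close the argument I would write the right-hand side as $K \delta^{1/p}$ for a constant $K$ depending only on $C, p, c_1, c_2, |\Omega|, |\partial\Omega|$, and take $\delta := (\epsilon/K)^p$, which forces $\|\hat{u} - u\| < \epsilon$.

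The main obstacle is the opening reinterpretation rather than the estimates: one must justify that $\hat{u}$ genuinely qualifies as the exact solution of the perturbed problem so that Definition \ref{def:well-posed} may legitimately be invoked. This is where the regularity hypothesis is essential — it guarantees $\tilde{F}, \tilde{G}$ are admissible data with finite norms — and where the uniqueness clause of well-posedness is used to identify $\hat{u}$ with the corresponding solution. Once this reinterpretation is secured, the remaining steps — Hölder's inequality on the finite domain and the explicit inversion for $\delta$ — are routine.
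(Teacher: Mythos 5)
Your proposal is correct and takes essentially the same route as the paper's own proof: both reinterpret $\hat{u}$ as the exact solution of a perturbed PDE, invoke the well-posedness stability estimate, pass from $L^1$ to $L^p$ via H\"older's inequality on the finite domain, bound the individual losses by $\hat{L}(\hat{u})/c_1$ and $\hat{L}(\hat{u})/c_2$, and solve for $\delta$ — indeed your $\delta = (\epsilon/K)^p$ is exactly the paper's choice of $\delta$. The only difference is presentational: the paper states this $\delta$ upfront while you derive it at the end.
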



\begin{proof}
	Let $\epsilon > 0$ be arbitrary. The aim is to find a $\delta$ for which eq. \ref{eq:thmdefaultconsistent} holds. Because the PDE given in eq. \ref{eq:pde_def_2} is assumed to be well-posed, there exists a finite Lipschitz constant $C$ for this particular PDE. Given this Lipschitz constant, and given the constants $p, c_1$ and $c_2$, choose  
	\begin{equation}
	\delta := \epsilon^p\left[C\left(c_1 ^ {-\frac{1}{p}}|\Omega|^{1 - \frac{1}{p}}  + c_2 ^ {-\frac{1}{p}}|\partial\Omega|^{1 - \frac{1}{p}} \right)\right]^{-p}.
	\end{equation}
	and let $\hat{u}$ be some approximate solution of the PDE for which $\hat{L}(\hat{u}) < \delta$. Because $\hat{u}$ may not exactly satisfy the operators $N$ and $B$ of eq. \ref{eq:pde_def_2}, one can write
	\begin{align}
	\begin{split}
	N(\bm{x}, \hat{u}) = F + \hat{F},\;\; B(\bm{x}, \hat{u}) = G + \hat{G}.
	\end{split}
	\end{align}
	In other words, $\hat{u}$ satisfies a perturbed version of the PDE. Because the PDE is well-posed, it follows that
	\begin{align}
	\begin{split}
	\|\hat{u} - u\| &< C\{\|(\hat{F} + F) - F\| + \|(\hat{G} + G) - G\|\}= C\{\|\hat{F}\| + \|\hat{G}\|\}.
	\end{split}
	\end{align}		
	Using H\"older's inequality \cite{Holder}, one finds for $\|\hat{F}\|$ the upper bound
	\begin{align}	
	\begin{split}
	\|\hat{F}\| &= \int_\Omega |\hat{F}(\bm{x})|d\bm{x}\leq |\Omega|^{1 - \frac{1}{p}} \left[\int_\Omega |\hat{F}(\bm{x})|^pd\bm{x}\right]^{\frac{1}{p}} = |\Omega|^{1 - \frac{1}{p}} \hat{L}_I(\hat{u})^\frac{1}{p}.
	\end{split}\label{eq:F_inequality_1}
	\end{align}	
	Similarly, $\|\hat{G}\|$ can be bounded from above by
	\begin{align}	
	\begin{split}
	\|\hat{G}\| &= \int_{\partial\Omega} |\hat{G}(\bm{x})|d\bm{x}_\Gamma\leq |\partial\Omega|^{1 - \frac{1}{p}} \left[\int_{\partial\Omega} |\hat{G}(\bm{x})|^pd\bm{x}_\Gamma\right]^{\frac{1}{p}}= |\partial\Omega|^{1 - \frac{1}{p}} \hat{L}_B(\hat{u})^\frac{1}{p}.
	\end{split}
	\end{align}			
	Combining these results gives
	\begin{align}
	\begin{split}
	\|\hat{u} - u\| &< C\{\|\hat{F}\| + \|\hat{G}\|\}\leq C\{|\Omega|^{1 - \frac{1}{p}} \hat{L}_I(\hat{u})^\frac{1}{p} + |\partial\Omega|^{1 - \frac{1}{p}} \hat{L}_B(\hat{u})^\frac{1}{p}\}\\
	& \leq C\left[c_1 ^ {-\frac{1}{p}}|\Omega|^{1 - \frac{1}{p}}  + c_2 ^ {-\frac{1}{p}}|\partial\Omega|^{1 - \frac{1}{p}} \right]\hat{L}(\hat{u})^\frac{1}{p}.
	\end{split}
	\end{align}
	Finally, applying the inequality $\hat L(\hat{u}) <\delta$ yields
	\begin{align}
	\|\hat{u} - u\| & < C\left[c_1 ^ {-\frac{1}{p}}|\Omega|^{1 - \frac{1}{p}}  + c_2 ^ {-\frac{1}{p}}|\partial\Omega|^{1 - \frac{1}{p}} \right]\delta^\frac{1}{p}\nonumber= \epsilon,
	\end{align}
	completing the proof.
\end{proof}

\begin{remark}
When one approximates the solution $u$ with a neural network, the activation function determines whether the assumptions made in the Theorem~\ref{thm:defaultconsistent} are satisfied. Activation functions that are in $C^\infty$ such as the hyperbolic tangent result in neural networks that satisfy all assumptions.
\end{remark}

Because Monte-Carlo approximations have a high probability of being close in value to the functions they approximate, it is unlikely that a small approximated value arises when the functional itself is large, given a sufficient number of collocation points. Theorem \ref{thm:defaultconsistent} can then be applied to conclude that a low loss function implies with high probability that the approximations are accurate, validating the use of the approximation for well-posed problems. This theorem also plays an important role in justifying the use of neural networks to minimize these loss functions or functionals; although neural networks are universal approximators, they are unable to exactly represent many functions, and may therefore not be able to exactly reach the minimum of the loss functions or functionals.  Theorem \ref{thm:defaultconsistent} shows that this is not a problem, as approximate optimization is sufficient. Thus, Theorem \ref{thm:defaultconsistent} can be used to show that when neural networks reach a low loss value, the approximation they define is likely accurate. 

This theorem cannot be used to show that it is reasonable to expect neural networks to reach such small loss values. Due to the significant gap between neural network theory and practice, it is beyond the scope of this work to formally prove this. Therefore, instead of looking at sufficient conditions to guarantee successful network training, one might look at necessary conditions. To this end, consider a loss functional for which local minima exist. It can be shown that the corresponding loss function that would be used to train the neural network would also have local minima for large neural networks with sufficiently smooth activation functions.

By the universal approximation theorem, any smooth function and its derivatives can be approximated to arbitrary accuracy on compact domains by large enough neural networks with sufficiently smooth activation functions. This also means that local minimum points can be approximated to arbitrary accuracy by such neural networks. Furthermore, note that these neural networks are continuous in their parameters, meaning that a small change in the parameters will result in a small change of the function that the network defines, as well as its derivatives. By assumption, functions close to the local minima have a loss that is larger than the local minimum. Thus, the parameter vectors that map to this region of functions also have larger losses. In other words, the local minimum also exists in the network's parameters. 

Popular training algorithms such as gradient descent do not handle local minima well. Though there are methods to escape local minima, these methods typically have a performance cost associated and are not guaranteed to find the global minimum. It is therefore very important that the loss functional does not have local minima. Fortunately, showing that the loss functional does not have local minima can be done with relative ease for linear PDEs. In fact, it can be shown that a much stronger property holds for these PDEs. This is done in Theorem \ref{thm:convexity}. 

\begin{theorem} \label{thm:convexity}
    Consider a linear PDE of the form given in eq. \ref{eq:pde_def_2} of order $k$. Then, the loss functional $L: \mathcal{F}\rightarrow\mathbb{R}$ defined in eq. \ref{eq:loss_def_4}, where $\mathcal{F}$ is the space of functions whose partial derivatives up to order $k$ exist and have finite $L^p$ norm, is convex.
\end{theorem}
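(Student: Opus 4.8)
The plan is to exploit the compositional structure of the functional: for a linear PDE the residual maps $\mathcal{N}(\bm{x},\cdot)$ and $\mathcal{B}(\bm{x},\cdot)$ are affine in $\hat{u}$, the scalar map $t\mapsto|t|^p$ is convex for $p\geq 1$, and both integration against a nonnegative measure and a nonnegative weighted sum preserve convexity. Chaining these standard facts gives the claim. Before invoking any of them, I would first confirm that $\mathcal{F}$ is itself a convex set, so that a convex combination $\hat{u}_\lambda := \lambda\hat{u}_1 + (1-\lambda)\hat{u}_2$ again lies in the domain. Since $\mathcal{F}$ is defined by the existence and finite $L^p$ norm of partial derivatives up to order $k$, and differentiation is linear, this is immediate, and it simultaneously guarantees $\hat{L}(\hat{u}_\lambda)<\infty$, so that the functional is genuinely a map $\mathcal{F}\to\mathbb{R}$.

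The first substantive step is to establish affinity of the residuals. Because the PDE is linear, $N(\bm{x},\cdot)$ and $B(\bm{x},\cdot)$ are linear operators, and I would verify directly that
\begin{equation}
\mathcal{N}(\bm{x}, \hat{u}_\lambda) = \lambda\, \mathcal{N}(\bm{x}, \hat{u}_1) + (1-\lambda)\,\mathcal{N}(\bm{x}, \hat{u}_2),
\end{equation}
together with the analogous identity for $\mathcal{B}$. The subtracted source terms $F$ and $G$ recombine correctly precisely because $\lambda + (1-\lambda) = 1$; this is exactly what turns the linear-plus-constant operator into an affine map that respects convex combinations.

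Next I would apply pointwise convexity of $t\mapsto|t|^p$ (valid for $p\geq 1$) to the affine identity above, giving for each fixed $\bm{x}$
\begin{equation}
\left|\mathcal{N}(\bm{x}, \hat{u}_\lambda)\right|^p \leq \lambda\left|\mathcal{N}(\bm{x}, \hat{u}_1)\right|^p + (1-\lambda)\left|\mathcal{N}(\bm{x}, \hat{u}_2)\right|^p,
\end{equation}
and the corresponding inequality on $\partial\Omega$. Integrating these dominated pointwise inequalities over $\Omega$ and $\partial\Omega$ preserves them, yielding $\hat{L}_I(\hat{u}_\lambda)\leq \lambda \hat{L}_I(\hat{u}_1) + (1-\lambda)\hat{L}_I(\hat{u}_2)$ and the same for $\hat{L}_B$. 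Multiplying by the positive weights $c_1,c_2$ and summing then gives $\hat{L}(\hat{u}_\lambda)\leq \lambda \hat{L}(\hat{u}_1) + (1-\lambda)\hat{L}(\hat{u}_2)$, which is convexity.

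I do not anticipate a genuine obstacle, since every step is a standard convexity-preserving operation. The only points requiring care are the bookkeeping in the affinity step (confirming the source terms cancel so that the residuals are affine rather than merely linear) and the closedness of $\mathcal{F}$ under convex combinations, so that the statement is well-typed. The one conceptually essential ingredient is linearity of the underlying operator: were $N$ nonlinear, the residual would fail to be affine and the composition argument would break, consistent with the paper's restriction to the linear setting.
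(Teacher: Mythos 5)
Your proposal is correct and takes essentially the same route as the paper's proof: both exploit linearity of $N$ and $B$ so that the residuals respect convex combinations (with the source terms recombining since $\lambda + (1-\lambda)=1$), then apply pointwise convexity of $t\mapsto|t|^p$ for $p\geq 1$, integrate the pointwise inequality, and combine the interior and boundary parts using the positive weights $c_1, c_2$. The only cosmetic difference is that you apply convexity of $|\cdot|^p$ directly to the affine identity, whereas the paper first uses the triangle inequality and then convexity on the resulting nonnegative combination; these are equivalent organizations of the same step.
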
 

\begin{proof}
	To show that the loss functional is convex, it must be shown that for any two functions $u_1, u_2\in \mathcal{F}$ and for any $t\in[0, 1]$, the inequality
	\begin{equation}
		\hat{L}(tu_1 + (1-t)u_2) \leq t\hat{L}(u_1) + (1 - t)\hat{L}(u_2)
	\end{equation} 
	holds. To this end, let $u_1, u_2\in \mathcal{F}$ and $t\in[0, 1]$ be arbitrary. For notational convenience, the interpolation is labeled $v(t) := tu_1 + (1-t)u_2$. Note that $v(t) \in \mathcal{F}$.  
	The loss functional at the interpolation is given by $\hat{L}(v(t)) = c_1 \hat{L}_I(v(t)) + c_2 \hat{L}_B(v(t))$, with $c_1, c_2 > 0$. The interior loss is considered first. It is given by
	\begin{align}
	\begin{split}
		\hat{L}_I(v(t)) &= \int_\Omega \left|\mathcal{N}(v(t))\right|^pd\bm{x}= \int_\Omega \left|N(v(t)) - F(\bm{x})\right|^pd\bm{x}. 
		\end{split}
	\end{align}
	For linear PDEs, $N$ is linear, and hence, using additionally the triangle inequality, this can be bounded by
	\begin{align}
	\begin{split}
	\hat{L}_I(v(t)) &= \int_\Omega \left|N(tu_1 + (1-t)u_2) - F(\bm{x})\right|^pd\bm{x}
	\leq \int_\Omega \left(t|\mathcal{N}(u_1)| + (1-t)|\mathcal{N}(u_2)|\right)^pd\bm{x}.
	\end{split}
	\end{align}	
Because $|x|^p$ is a convex function for $p\geq 1$, it holds that 
\begin{equation}
    \left(t|\mathcal{N}(u_1)| + (1-t)|\mathcal{N}(u_2)|\right)^p \leq t|\mathcal{N}(u_1)|^p + (1-t)|\mathcal{N}(u_2)|^p.
\end{equation}
Thus, it follows that
\begin{align}
\begin{split}
	\hat{L}_I(v(t)) &\leq \int_\Omega \left(t|\mathcal{N}(u_1)|^p + (1-t)|\mathcal{N}(u_2)|^p\right)d\bm{x} \leq t\int_\Omega |\mathcal{N}(u_1)|^pd\bm{x} + (1-t)\int_\Omega |\mathcal{N}(u_2)|^p d\bm{x}\\
	& = t\hat{L}_I(u_1) + (1 - t)\hat{L}_I(u_2).
\end{split}\label{eq:convexproof1}
\end{align}	
The same reasoning can be applied to show that when the boundary operator $B$ is linear,
\begin{align}
\hat{L}_B(v(t)) &\leq t\hat{L}_B(u_1) + (1 - t)\hat{L}_B(u_2). \label{eq:convexproof2}
\end{align}	
Eqs. \ref{eq:convexproof1} and \ref{eq:convexproof2} can then be combined to complete the proof. 	
\end{proof}

Thus, for any linear PDE, the loss functional defined in eq. \ref{eq:loss_def_4} is convex. A direct consequence is that the loss functional has no local minima. Therefore, any local minima that one encounters while training must necessarily be a product of the configuration of the employed neural network. Although it is well known that neural networks convert convex loss functionals into non-convex problems in parameter space, many recent studies indicate that this non-convexity can be overcome when dealing with classification problems. For instance, the study of \cite{LossSurfNoBarrier} empirically shows that local minima can be connected with paths through parameter space for which the loss stays low. Other work has also shown that in the limit of the number of nodes tending to infinity, convexity can be assumed \cite{bach2017breaking}.
Even though classification problems cannot be directly compared to solving PDEs, one would expect these problems to share some characteristics; after all, in function space, classification problems are convex, just like solving linear PDEs.

It is important to note that Theorem \ref{thm:convexity} required the assumption of linearity; for nonlinear PDEs, it is not clear whether the loss functional is convex. For some nonlinear PDEs the loss functional may even have local minima. In these cases, one might consider using global optimization algorithms instead of local algorithms. 

\section{Loss Functional Modification} \label{sec:modification}

The previous section has provided a motivation for the use of the methods introduced in the study of \cite{PhysInformedI}. This was done by reframing the process of solving a PDE as a minimization problem. 

In this section, the methods are considered from a different perspective to highlight an important choice to be made. In this section a proposed modification to the loss functionals is discussed. This modification results in a more general formulation that contains an additional hyperparameter. This parameter may be chosen in a way that is tailored to the specific PDE one aims to solve, resulting in a method that is more generally applicable. 

\subsection{Multi-Objective Optimization}

Recall that solving a PDE is equivalent to finding a function satisfying certain constraints. So far, these constraints have been included in the loss functionals, such as in eq. \ref{eq:loss_def_4}. Because there is more than a single constraint in this functional, minimizing it can be viewed as a scalarization of the multi-objective optimization problem given by
\begin{equation}
    \argmin_{\hat{u}} \left(\hat{L}_I(\hat{u}), \hat{L}_B(\hat{u})\right).
\end{equation}
In other words, multiple objectives were merged into a single objective function. This method only works under the assumption that a very low loss can be reached. When low losses are reached, the exact value of the individual losses is not important, as both are necessarily small. However, given the limitations of the capacity of finite neural networks, it may not always be possible to reach such low values. If the different constraints that must be satisfied are not of similar difficulty, then the neural network might not optimize all of them equally. In some cases the network might even sacrifice one objective to better optimize the others. To avoid such problems, the multi-objective optimization problem should be treated more carefully. 

Viewed from this perspective, it makes sense to apply a weighted scalarization. The question how these weights should be chosen will be addressed later. To perform this weighted scalarization, we introduce the scaling parameter $\lambda\in(0, 1)$, such that the modified loss functional is given by
\begin{align}
	\hat{L}(\hat{u}) &:= \lambda \hat{L}_I(\hat{u}) + (1 - \lambda) \hat{L}_B(\hat{u}) \label{eq:loss_def_5}\\
	&= \lambda \int_\Omega \left|\mathcal{N}(\bm{x}, \hat{u})\right|^p d\bm{x} + (1 -  \lambda)\int_{\partial\Omega} \left|\mathcal{B}(\bm{x}, \hat{u})\right|^p d\bm{x}_\Gamma.\nonumber
\end{align}
This hyperparameter $\lambda$ will be referred to as the loss weight. Note that theorem \ref{thm:defaultconsistent} holds for the redefined loss functional of eq. \ref{eq:loss_def_5} for any constant $\lambda$. With this modification, a scaled version of the original loss function can be recovered by setting
\begin{equation}
	\lambda = \frac{|\partial\Omega|}{|\partial\Omega| + |\Omega|}	,\label{eq:lambda_orig}
\end{equation}
and applying a Monte-Carlo approximation, though, there is little motivation for this particular choice of $\lambda$. This family of loss functions can be used to solve a wider range of problems. This is a valuable property, since the original methods were designed to be general. To exploit this, the choice of $\lambda$ is investigated in sections \ref{subsec:constlam} and \ref{sec:normalization}. In the first of these sections, $\lambda$ is treated as a constant. In Section \ref{sec:normalization}, it is treated as a function instead, allowing the hyperparameter to have a deeper effect when training the neural networks. 

\subsection{Optimized Loss Weights} \label{subsec:constlam}
This section investigates the previously introduced hyperparameter $\lambda$ in more detail, under the assumption that it is kept constant. This includes the derivation of an optimal choice for this loss weight with respect to a specific error measure. 

Optimizing $\lambda$ is not a straightforward process, mainly because it is unclear with respect to which quantity this parameter should be optimized. Since the described methods all work by minimizing a loss function to solve a PDE, one may consider choosing $\lambda$ to optimize the relation between the loss functional and the solution of the PDE; a weak relation between the accuracy and the loss functional might result in low losses, but also in inaccurate approximations. On the other hand, a strong relation between these variables would imply that the neural network spends its capacity as effectively as possible. 

The strength of this relation can be probed by comparing the loss of a function to the error of this function. The aim is to find the loss functional of which the minimizer is as close to the solution $u$ as possible. Without explicit knowledge about the network configuration, it is a-priori not clear which functions can and cannot be reached by the neural network, and therefore it is also not clear which loss values may be achieved. Here, a helpful assumption can be made. Under the current setting, the total loss functional interpolates two different functionals with weight $\lambda$. If the value of $\lambda$ is changed such that the interior loss becomes more important, then the neural network might be able to reduce the interior loss at the expense of some accuracy at the boundary. As a result, the total loss value may not change significantly if $\lambda$ is changed. This provides an important tool to perform the optimization: the optimal loss value can be held constant. 

One could consider minimizing the absolute error of the solution for a given, fixed loss value. Although this would yield the tightest upper bound on the error for any given loss value, practical results may not benefit much from this: because neural networks cannot approximate every function equally easily, certain error distributions may not arise in practice. In particular, the results of section \ref{sec:results} suggest that the error of the derivatives tends to be highly correlated with the derivatives of the true solution. Depending on the shape of the true solution of a PDE, such error distributions often lead to absolute errors that are significantly smaller than the worst-case scenario that would be assumed to derive absolute error bounds.

For this reason, instead of considering the absolute error, one could consider minimizing some form of relative error, both of the solution itself and of its derivatives. More formally, this results in the following definition. 
\begin{definition}
	A candidate solution $\hat{u}$ is called $\epsilon$-close to the true solution $u$ if it satisfies
	\begin{equation}
		\left|\partial_{\bm{x}}^{\bm{\gamma}} u(\bm{x}) - \partial_{\bm{x}}^{\bm{\gamma}}\hat{u}(\bm{x})\right| \leq \epsilon\left|\partial_{\bm{x}}^{\bm{\gamma}}u(\bm{x})\right| \label{eq:epsClose}	\end{equation}
	for all $\bm{x}\in\mathbb{R}^d$ and for any vector $\gamma\in\mathbb{R}^d$ with positive elements $\gamma_i \geq 0$. Here, $\partial_{\bm{x}}^{\bm{\gamma}}$ denotes the parametrized partial derivative given by
	\begin{equation}
		\partial_{\bm{x}}^{\bm{\gamma}}u := \left(\prod_{i=1}^{d} \frac{\partial^{\gamma_i}}{\partial x_i^{\gamma_i}}\right)u.
	\end{equation}
\end{definition}
This property represents the earlier mentioned observation that the error of solutions found by neural networks seems to be highly correlated with the true solution. Intuitively, $\epsilon$-closeness allows approximations to have large errors in the derivatives where the true solution itself has large derivatives. In order to be $\epsilon$-close, approximations must be very accurate at the flatter regions of the true solution.

Given this definition, it is desirable that $\epsilon$ be small. Thus, $\epsilon$ can be used to guide the choice of $\lambda$. To this end, consider a general, linear PDE given by
\begin{equation}
	\begin{cases}
	\begin{aligned}
	\sum_{i=1}^{k_I} \alpha^I_i(\bm{x}) \partial_{\bm{x}}^{\bm{\beta}_i}u(\bm{x}) &= F(\bm{x}), && \text{in } \Omega,\\
	\sum_{i=1}^{k_B} \alpha^B_i(\bm{x}) \partial_{\bm{x}}^{\bm{\gamma}_i}u(\bm{x}) &= G(\bm{x}), && \text{on }\partial\Omega.
	\end{aligned} \label{eq:pde_def_3}
	\end{cases}
\end{equation}
The aim is to derive an upper bound for the losses under the assumption that the approximated solution $\hat{u}$ is $\epsilon$-close to $u$. First, consider the interior loss functional. Applying eq. \ref{eq:interiorLoss} to the PDE results in the interior loss functional
\begin{align}
\begin{split}
\hat{L}_I(u) &= \int_{\Omega} \left|\mathcal{N}(\bm{x}, u)\right|^p d\bm{x}= \int_{\Omega} \left|\sum_{i=1}^{k_N} \alpha^I_i(\bm{x}) \partial_{\bm{x}}^{\bm{\beta}_i}u(\bm{x}) - F(\bm{x})\right|^p d\bm{x}.\label{eq:loss_def_interior_2}
\end{split}
\end{align}	
Note that the solution $u$ satisfies
\begin{equation}
\mathcal{N}(\bm{x}, u) = \sum_{i=1}^{k_N} \alpha^I_i(\bm{x}) \partial_{\bm{x}}^{\bm{\beta}_i}u(\bm{x}) - F(\bm{x}) = 0
\end{equation}
for all $\bm{x}\in\Omega$. Therefore, it follows that
\begin{align}
\begin{split}
	\mathcal{N}(\bm{x}, \hat{u}) 
	= \sum_{i=1}^{k_N} \alpha^I_i(\bm{x}) \left[\partial_{\bm{x}}^{\bm{\beta}_i}\hat{u}(\bm{x}) - \partial_{\bm{x}}^{\bm{\beta}_i}u(\bm{x})\right].
	\end{split}
\end{align}
Substituting the approximation $\hat{u}$ into eq. \ref{eq:loss_def_interior_2} allows the integral to be written as
\begin{align}
	\hat{L}_I(\hat{u}) &= \int_{\Omega} \left|\sum_{i=1}^{k_N} \alpha^I_i(\bm{x}) \left[\partial_{\bm{x}}^{\bm{\beta}_i}\hat{u}(\bm{x}) - \partial_{\bm{x}}^{\bm{\beta}_i}u(\bm{x})\right]\right|^p d\bm{x}.
\end{align}
Because it was assumed that $\hat{u}$ satisfies eq. \ref{eq:epsClose}, the integrand, which will be labeled $l_I(\bm{x}, \hat{u})$, can be bounded by
\begin{align}
	\begin{split}
	l_I(\bm{x}, \hat{u}) &= \left|\sum_{i=1}^{k_N} \alpha^I_i(\bm{x}) \left[\partial_{\bm{x}}^{\bm{\beta}_i}\hat{u}(\bm{x}) - \partial_{\bm{x}}^{\bm{\beta}_i}u(\bm{x})\right]\right|^p\leq \left[\sum_{i=1}^{k_N} \left|\alpha^I_i(\bm{x})\right| \left|\partial_{\bm{x}}^{\bm{\beta}_i}\hat{u}(\bm{x}) - \partial_{\bm{x}}^{\bm{\beta}_i}u(\bm{x})\right|\right]^p\\
	&\leq \left[\sum_{i=1}^{k_N} \left|\alpha^I_i(\bm{x})\right| \epsilon \left|\partial_{\bm{x}}^{\bm{\beta}_i}u(\bm{x})\right| \right]^p= \epsilon^p \left[\sum_{i=1}^{k_N} \left|\alpha^I_i(\bm{x}) \partial_{\bm{x}}^{\bm{\beta}_i}u(\bm{x})\right| \right]^p.
	\end{split}
\end{align}
The resulting upper bound for the interior loss is thus given by
\begin{align}
	\begin{split}
	\hat{L}_I(\hat{u}) &= \int_\Omega l_I(\bm{x}, \hat{u}) d\bm{x} \leq \epsilon^p \int_\Omega \left[\sum_{i=1}^{k_N} \left|\alpha^I_i(\bm{x}) \partial_{\bm{x}}^{\bm{\beta}_i}u(\bm{x})\right| \right]^p d\bm{x}=:\epsilon^p M_I(u).\label{eq:interior_loss_bound}
	\end{split}
\end{align}
One can derive an upper bound for the boundary loss functional in a similar fashion. For the PDE defined in eq. \ref{eq:pde_def_3}, the resulting upper bound for the boundary loss is given by
\begin{align}
	\hat{L}_B(\hat{u}) \leq \epsilon^p \int_{\partial\Omega} \left[\sum_{i=1}^{k_B}\left| \alpha^B_i(\bm{x}) \partial_{\bm{x}}^{\bm{\gamma}_i}u(\bm{x})\right|\right]^p d\bm{x}_\Gamma=:\epsilon^p M_B(u). \label{eq:boundary_loss_bound} 
\end{align}

These inequalities define necessary, but not sufficient, conditions for a solution to be $\epsilon$-close to the true solution. Crucially, these bounds are extremely lenient. For a given $\epsilon$, the upper bound on the loss functional is much larger than a sufficient bound would need to be to guarantee the same absolute error. This discrepancy arises mainly from the assumptions on the distributions of the derivatives. Note that these assumptions result in loss bounds that depend on the true solution of the PDE. In contrast, sufficient conditions to guarantee a certain absolute error would only depend on the PDE itself. This ties in with the same reasoning as used before, i.e. solutions with large derivatives tend to have favorable error distributions, and can therefore have larger errors in their derivatives to achieve the same accuracy. 

From a different perspective, eqs. \ref{eq:interior_loss_bound}, \ref{eq:boundary_loss_bound} can be used to obtain lower bounds on $\epsilon$. Recall that the aim of this section is to find the optimal value of $\lambda$ that would result in the most accurate approximation, i.e. the smallest $\epsilon$ such that the approximation is $\epsilon$-close. Since training algorithms are only aware of the total loss, it makes sense to perform this optimization with respect to some arbitrary, fixed value $\hat{L}(\hat{u})$.

Because $\lambda$ and $\epsilon$ have no direct relation, the lower bounds on $\epsilon$ given in eqs. \ref{eq:interior_loss_bound},  \ref{eq:boundary_loss_bound} should be minimized instead. Rewriting these bounds into a single expression yields
\begin{equation}
    \epsilon \geq \hat{\epsilon} := \left[\max\left\{\frac{\hat{L}_I(\hat{u})}{M_I(u)}, \frac{\hat{L}_B(\hat{u})}{M_B(u)}\right\}\right]^{\frac{1}{p}}.
\end{equation}

While $\hat{\epsilon}$ is not dependent on $\lambda$, it can be bounded from above using the total loss functional. In particular, for the interior loss one finds that
\begin{align}
\begin{split}
\hat{L}_I(\hat{u}) &= \frac{1}{\lambda} \left[\hat{L}(\hat{u}) - (1 - \lambda)\hat{L}_B(\hat{u})\right] \leq \frac{1}{\lambda}\hat{L}(\hat{u}).
\end{split}
\end{align}
Similarly, one finds for the boundary loss that
\begin{align}
\begin{split}
\hat{L}_B(\hat{u}) &= \frac{1}{1 - \lambda} \left[\hat{L}(\hat{u}) - \lambda \hat{L}_I(\hat{u})\right] \leq \frac{1}{1 - \lambda}\hat{L}(\hat{u}).
\end{split}
\end{align}
Thus, $\hat{\epsilon}$ is bounded from above by
\begin{equation}
    \hat{\epsilon} = \left[\max\left\{\frac{\hat{L}_I(\hat{u})}{M_I(u)}, \frac{\hat{L}_B(\hat{u})}{M_B(u)}\right\}\right]^{\frac{1}{p}} \leq \left[\max\left\{\frac{\hat{L}(\hat{u})}{\lambda M_I(u)}, \frac{\hat{L}(\hat{u})}{(1-\lambda)M_B(u)}\right\}\right]^{\frac{1}{p}}.
\end{equation}
Since $\hat{L}(\hat{u})$ is assumed to be constant, the optimal choice for $\lambda$ then becomes
\begin{align}
\begin{split}
	\lambda &= \argmin_\lambda \hat{\epsilon} = \argmin_\lambda\left[\max\left\{\frac{1}{\lambda M_I(u)}, \frac{1}{(1 - \lambda) M_B(u)}\right\}\right].
	\end{split}
\end{align}
The smallest maximum occurs when both terms are equal, i.e. when
\begin{equation}
    \lambda M_I(u) = (1 - \lambda) M_B(u),
\end{equation}
of which the solution is given by
\begin{equation}
	\lambda = \frac{M_B(u)}{M_I(u) + M_B(u)}. \label{eq:optilambda}
\end{equation}
Notice that this choice of $\lambda$ may be very different from the original choice as given in eq. \ref{eq:lambda_orig}. 

Eq. \ref{eq:optilambda} gives a choice of $\lambda$ that is in some sense optimal: for a given loss value $\hat{L}(\hat{u})$, this choice of $\lambda$ results in the smallest $\epsilon$ for which $\hat{u}$ may be $\epsilon$-close to $u$. This choice gives the loss functionals some additional useful properties. Most notably, the relative importance of the interior and boundary losses become scale-invariant. Specifically, when one rewrites a PDE of the form given in eq. \ref{eq:pde_def_1} as 
\begin{align}
	\begin{cases}
	\begin{aligned}
	c_1\mathcal{N}(\bm{x}, u) &= 0 &&\text{in } \Omega,\\
	c_2\mathcal{B}(\bm{x}, u) &= 0 &&\text{on } \partial\Omega,
	\end{aligned}
	\end{cases}
\end{align}
then the ratio
\begin{equation}
	\frac{\lambda \hat{L}_I(\hat{u})}{(1 - \lambda) \hat{L}_B(\hat{u})}
\end{equation}
remains constant for all $c_1, c_2 \neq 0$. This is a very important property. All PDEs come with some inherent scale factors, and even the default choices $c_1 = c_2 = 1$ are difficult to justify. This choice of $\lambda$ renders these scales almost completely irrelevant. 

However, it is important to realize that $\epsilon$-closeness is contingent on some very strong assumptions, some which are easily shown to not hold true in practice. In particular, functions that zero out anywhere in the domain are problematic for this definition, since $\epsilon$-closeness implies zero error for such functions at specific regions. It is likely that the true dynamics of neural networks depend on the behavior of the target function in a small area, rather than at a particular point. 

This does not immediately render $\epsilon$-closeness useless. However, one should carefully examine the PDE before using $\epsilon$-closeness to estimate the difficulty of optimizing the involved objectives. For instance, problems with homogeneous boundary conditions require different methods to properly estimate the difficulties; applying $\epsilon$-closeness to such problems results in zero expected boundary error, leading to an optimal loss weight of $0$. This definition is likely only useful if the behavior of the true solution on the boundary is comparable to its behavior in the interior of the domain. Similarly, adding offsets to linear PDEs can have a major impact on the theoretical optimal loss weight, even though neural networks can compensate for such offsets by simply tuning the bias of the output layer. Therefore, the definition is best suited for problems with solutions with zero mean. 

\section{Magnitude Normalization}	\label{sec:normalization}
The particular choice of $\lambda$ derived in the previous section resulted in a relation between $\lambda$ and the true solution $u$. In many practical applications, the required information about $u$ is unavailable. This section addresses this problem by introducing a heuristic method that approximates the optimal choice as $\hat{u}$ approaches $u$. This heuristic method has one crucial difference compared to the methods discussed so far; $\lambda$ is no longer kept constant. Instead, $\lambda$ is treated as an additional functional to be optimized by the neural network.

The starting point of deriving this heuristic method is eq. \ref{eq:optilambda}. This optimal choice of $\lambda$ depends solely on the true solution of the PDE, leading to a total loss functional given by
\begin{equation}
	\hat{L}(\hat{u}) = \frac{M_B(u) \hat{L}_I(\hat{u})}{M_I(u) + M_B(u)} + \frac{M_I(u) \hat{L}_B(\hat{u})}{M_I(u) + M_B(u)}.
\end{equation}
Since $u$ is unavailable, and $\hat{u}$ is meant to approximate $u$, one could consider approximating this loss functional by
\begin{equation}
	\hat{L}(\hat{u}) = \frac{M_B(\hat{u}) \hat{L}_I(\hat{u})}{M_I(\hat{u}) + M_B(\hat{u})} + \frac{M_I(\hat{u}) \hat{L}_B(\hat{u})}{M_I(\hat{u}) + M_B(\hat{u})}. \label{eq:loss_def_6}
\end{equation}
When $\hat{u}\approx u$, this redefined loss functional behaves similarly to the loss functional of eq. \ref{eq:loss_def_5} with the optimal choice of $\lambda$, since the bounds $M_I$ and $M_B$ will be relatively constant. However, when $\hat{u}$ differs from $u$, the behavior may no longer be similar, as the approximated bounds $M_I(\hat{u})$ and $M_B(\hat{u})$ cannot be treated as constants. 

This behaviour is difficult to analyze without specific information about the PDE one aims to solve. However, some quirks may be identified by considering examples. Many PDEs admit trivial solutions when the boundary conditions are homogeneous. Even though such PDEs are generally only of interest with inhomogeneous boundary conditions, the mere existence of these trivial solutions is problematic. To see why, consider the linear, well-posed PDE given in eq. \ref{eq:trivialproblem}.
\begin{equation} \label{eq:trivialproblem}
\begin{cases}
\begin{aligned}
\mathcal{N}(\bm{x}, u) &= 0, && \text{in } \Omega,\\
u(\bm{x}) &= G(\bm{x}), && \text{on }\partial\Omega,
\end{aligned}
\end{cases}
\end{equation}
with $G(\bm{x}) \neq 0$ and $\mathcal{N}(\bm{x}, 0) = 0$. Clearly, the trivial solution would solve this problem if and only if $G(\bm{x}) = 0$. To show why this is problematic, we consider behavior of the loss functional given in eq. \ref{eq:loss_def_6} for this function. For the interior and boundary losses, it holds that
\begin{align}
	\hat{L}_I(0) = 0,\;\; \hat{L}_B(0) = \int_{\partial\Omega} \left|G(\bm{x})\right|^p d\bm{x}_\gamma > 0.
\end{align}
Similarly, the bounds given in eq. \ref{eq:boundary_loss_bound}-\ref{eq:interior_loss_bound} satisfy
\begin{align}
	M_I(0) = 0,\;\;M_B(0) = \int_{\partial\Omega} \left|G(\bm{x})\right|^p d\bm{x}_\gamma = \hat{L}_B(0).
\end{align}
Therefore, the total loss thus becomes
\begin{align}
\begin{split}
\hat{L}(0) &= \frac{M_B(0) \hat{L}_I(0) + M_I(0) \hat{L}_B(0)}{M_I(0) + M_B(0)}= \frac{\hat{L}_B(0) \cdot 0 + 0 \cdot \hat{L}_B(0)}{\hat{L}_B(0)} = 0.
\end{split}
\end{align}
Thus, a function that violates the boundary conditions is able to bring the loss functional of eq. \ref{eq:loss_def_6} down to zero. Note that this problem can also arise if the boundary magnitude and losses can be zeroed out simultaneously; in that case, the PDE does not need to be satisfied to find a solution with zero loss. Thus, Eq. \ref{eq:loss_def_6} is not a viable loss functional.

It turns out that this issue can be easily addressed while maintaining the relative sizes of the terms $M_B\hat{L}_I$ and $M_I\hat{L}_B$. This can be achieved by multiplying equation \ref{eq:loss_def_6} with a single scale factor, such that the total loss functional is given by
\begin{align}
\begin{split}
\hat{L}(\hat{u}) &= \frac{M_I(\hat{u}) + M_B(\hat{u})}{M_I(\hat{u})M_B(\hat{u})}\left[\frac{M_B(\hat{u}) \hat{L}_I(\hat{u})}{M_I(\hat{u}) + M_B(\hat{u})} + \frac{M_I(\hat{u}) \hat{L}_B(\hat{u})}{M_I(\hat{u}) + M_B(\hat{u})}\right]\\
&= \frac{\hat{L}_I(\hat{u})}{M_I(\hat{u})} + \frac{\hat{L}_B(\hat{u})}{M_B(\hat{u})}. \label{eq:loss_def_7}
\end{split}
\end{align}
The resulting loss functional has a very intuitive interpretation. Each loss functional is normalized by the magnitude of the terms that comprise it. The resulting terms $\frac{\hat{L}_I(\hat{u})}{M_I(\hat{u})}$ and $\frac{\hat{L}_B(\hat{u})}{M_B(\hat{u})}$ can be viewed as relative losses. The resulting method is called \textit{magnitude normalization}.

This loss functional, unlike the functional defined in eq. \ref{eq:loss_def_6}, again possesses the property that the unique global minimizer is the true solution of the PDE. However, it is not clear whether this loss functional possesses additional local minima, which might compromise convergence to the true solution. Local minima thus pose a threat to the stability of the method. 

However, such local minima are characterized by strictly positive loss. In many cases, stability problems can be therefore avoided by pre-training the network; if the neural network is brought to a state with loss lower than one would have at a local minimum, then this local minimum should never be reached. 

To further manage stability problems, some additional modifications are introduced. Many problems come with Dirichlet or Neumann boundary conditions, i.e. known operator values along the boundary. This allows one to compute the value $M_B(u)$ in advance. Using this value instead of approximating it with $M_B(\hat{u})$ can significantly improve stability. Another stability concern arises from source functions. Since $\epsilon$-closeness does not depend on source functions, neither do $M_I$ and $M_B$. However, source functions can significantly inflate the initial value of the loss functional, allowing the network to converge to local minima with higher losses. To prevent this from happening, source functions are included in $M_I$. Close to the true solution, this generally has little effect, as the effective loss weight may only change by up to a factor two. Far away from the true solution, however, the behavior is much more stable. For a PDE of the form given in eq. \ref{eq:pde_def_3} with Dirichlet or Neumann boundary conditions, the resulting magnitude normalized loss functional is given by
\begin{equation}
\small
\begin{split}
\hat{L}(\hat{u}) &= \frac{\int_\Omega \left|\left(\sum_{j=1}^{k_I}\alpha^I_j(\bm{x})\partial^{\bm{\beta}_j}_{\bm{x}} \hat{u}(\bm{x}, \bm{\theta})\right) - F(\bm{x})\right|^p d\bm{x}}{\int_\Omega \left[\sum_{j=1}^{k_I}\left|\alpha^I_j(\bm{x})\partial^{\bm{\beta}_j}_{\bm{x}} \hat{u}(\bm{x}, \bm{\theta})\right| + \left|F(\bm{x})\right|\right]^p d\bm{x}} + \frac{\int_{\partial\Omega} \left[\partial^{\bm{\gamma}}_{\bm{x}} \hat{u}(\bm{x}, \bm{\theta}) - G(\bm{x})\right]^p d\bm{x}_\Gamma}{\int_{\partial\Omega} \left|G(\bm{x})\right|^p d\bm{x}_\Gamma}. \label{eq:loss_def_8}
\end{split}
\end{equation}

\begin{remark}
Magnitude normalization can favor functions with large derivatives, since those functions lead to a large denominator in the loss functional. While this can cause instabilities, it can also aid in solving more difficult problems, as is shown in Section \ref{sec:results}. 
\end{remark}
\section{Method Setup} \label{subsec:methodsetup}

As stated, this section aims to assess the methods by utilizing small neural networks in conjunction with powerful training algorithms. In this section, the network configuration, the evaluation of the loss functionals, and the specifics of the training algorithms are discussed. 

For the sake of generality, only fully connected feedforward neural networks are considered here. The networks have four hidden layers with twenty neurons each unless specified otherwise. This yields $1301 + 20d$ degrees of freedom, where $d$ is the dimension of the PDE. This network size is kept constant throughout this section. 
The hyperbolic tangent is used as the activation function, though alternative choices like sinusoids seem to yield comparable results. Glorot initialization \cite{pmlr-v9-glorot10a} is used to generate the initial weights of the neural networks. Initially, the biases are set to zero.

There are many different algorithms with which these networks can be trained. First-order methods such as Adam are among the most popular. Another prominent training algorithm is the limited-memory version of the Broyden-Fletcher-Goldfarb-Shanno algorithm (L-BFGS) \cite{byrd1995limited}. This is a quasi-Newton method that is able to achieve exceptionally accurate results, at the cost of one major drawback: this algorithm is incompatible with batching, and must be provided with a dataset that is representative of the full solution. The problems that are solved in this work are simple enough for this limitation to not be prohibitive, and therefore this algorithm is the training method of choice in this work. We also tested first-order methods such as Adam \cite{Adam} and stochastic gradient descent, but these methods were always outperformed by L-BFGS. For problems that require datasets that are too large to be processed at once, we recommend using Adam. 

\subsection{Loss Functions}
Thus, we aim to use L-BFGS to optimize the vector $\bm{\theta}$ containing all parameters of the neural network with respect to some objective function. As stated before, the integrals present in the loss functionals in sections \ref{sec:motivation}-\ref{sec:normalization} can be approximated using Monte-Carlo integration. We explicate this for linear PDEs of the form of eq. \ref{eq:pde_def_3}. Recall that the general form of the interior and boundary loss functionals $\hat{L}_I$ and $\hat{L}_B$ is given in eq. \ref{eq:loss_def_interior_2}. Let the output of the neural network at position $\bm{x}$ be given by $\hat{u}(\bm{x}, \bm{\theta})$. Then, after applying a Monte-Carlo approximation, we obtain the interior and boundary loss functions for the parameter vector $\bm{\theta}$ given by
\begin{align}
L_I(\bm{\theta}) &= \frac{1}{n_I}\sum_{i=1}^{n_I} \left|\left(\sum_{j=1}^{k_I}\alpha^I_j(\bm{x}^I_i)\partial^{\bm{\beta}_j}_{\bm{x}} \hat{u}(\bm{x}^I_i, \bm{\theta})\right) - F(\bm{x}^I_i)\right|^p,\label{eq:loss_int_MC}\\
L_B(\bm{\theta}) &= \frac{1}{n_B}\sum_{i=1}^{n_B} \left|\left(\sum_{j=1}^{k_B}\alpha^B_j(\bm{x}^B_i)\partial^{\bm{\gamma}_j}_{\bm{x}} \hat{u}(\bm{x}^B_i, \bm{\theta})\right) - G(\bm{x}^B_i)\right|^p.\label{eq:loss_bound_MC}
\end{align}
Here, the collocation points $\{x^I_i\}_{i=1}^{n_I}$ and $\{x^B_i\}_{i=1}^{n_B}$ are distributed uniformly over $\Omega$ and $\partial\Omega$, respectively. Using eqs. \ref{eq:loss_int_MC}, \ref{eq:loss_bound_MC}, Monte-Carlo approximations of eqs. \ref{eq:loss_def_3} with $p=2$ and eq. \ref{eq:loss_def_5} are given by
\begin{align}
L(\bm{\theta}) &= L_I(\bm{\theta}) + L_B(\bm{\theta}) \label{eq:loss_def_og_MC},\\
L(\bm{\theta}) &= \Omega\lambda L_I(\bm{\theta}) + \partial\Omega(1-\lambda)L_B(\bm{\theta}), \label{eq:loss_def_opt_MC}
\end{align}
respectively. The Monte-Carlo approximation of Eq. \ref{eq:loss_def_8}  is given by
\begin{equation}
\small
\begin{split}
L(\bm{\theta}) &= \frac{\sum_{i=1}^{n_I} \left[\left(\sum_{j=1}^{k_I}\alpha^I_j(\bm{x}_i^I)\partial^{\bm{\beta}_j}_{\bm{x}} \hat{u}(\bm{x}_i^I, \bm{\theta})\right) - F(\bm{x}_i^I)\right]^p}{\sum_{i=1}^{n_I} \left[\sum_{j=1}^{k_I}\left|\alpha^I_j(\bm{x}_i^I)\partial^{\bm{\beta}_j}_{\bm{x}} \hat{u}(\bm{x}_i^I, \bm{\theta})\right| + \left|F(\bm{x}_i^I)\right|\right]^p} + \frac{\sum_{i=1}^{n_B} \left[\partial^{\bm{\gamma}}_{\bm{x}} \hat{u}(\bm{x}^B_i) - G(\bm{x}_i^B)\right]^p}
{\sum_{i=1}^{n_B} \left|G(\bm{x}_i^B)\right|^p}. \label{eq:loss_def_mag_MC}
\end{split}
\end{equation}
These loss functions define three methods of interest that will be considered in our experiments: the original method as introduced in \cite{PhysInformedI}, our proposed modification to this method with an optimal loss weight, and the heuristic approximation of this optimal weight, respectively. To perform the experiments, the norm $p=2$ is used. Note that instead of minimizing some function $f(\bm{\theta})$, one could equivalently minimize the function $g(f(\bm{\theta}))$ for some monotonically increasing $g$. Because the loss functions are supposed to become small during training, the gradients of these loss functions with respect to $\bm{\theta}$ will likely become very small as well. Therefore, instead of minimizing the loss functions directly, the logarithm of the loss functions is minimized. In many cases, this significantly improves the results.

\subsection{Adaptive Collocation Points}
The loss functions depend on the hyperparameters $n_I$ and $n_B$. Determining the required number of collocation points, both on the boundary and in the interior of the domain, is generally challenging, as it depends on the variance of the loss functions, which in turn depends on the state of the neural network. 
We propose an alternative solution: choosing the point counts adaptively, by comparing the training loss to some validation loss. First choose initial values for the point counts $n_I$ and $n_B$ and then generate two different sets of collocation points, each consisting of $n_I$ interior and $n_B$ boundary points. The first set is the training set, and the second set is the validation set. L-BFGS is used to minimize the loss function evaluated on the training set. During training, the loss function is also evaluated on the validation set. If at any point during training the interior or boundary validation loss is more than a factor $q$ larger than either the interior or boundary training loss, then the variance of the respective loss function is too large to be accurately approximated with the number of collocation points used. In this case, the corresponding point count $n_I$ or $n_B$ is doubled, new collocation point sets are generated, and training is resumed. The initial point counts $n_I$ nd $n_B$ are generally set to 512. The precise value of the hyperparameter $q$ seems to have little effect, and $q=5$ seems to be a reasonable trade-off between accuracy and speed. 

\subsection{GPU Acceleration}

Neural networks are well suited for parallelization, as many of the computations that must be performed to compute a training iteration are independent. The study of \cite{NORDSTROM1992260} identifies four different strategies with varying granularity that can be used to parallelize neural network computations. This shows that the computations can be distributed effectively across a large number of cores. 
To exploit this, many libraries have been developed to aid the parallelization of deep learning. Notable examples of such libraries are Theano, Pytorch and Tensorflow. 
In this work, we use Tensorflow version 1.15.0. to perform the computations in parallel. Tensorflow does not explicitly use any of the four parallelization strategies identified in \cite{NORDSTROM1992260}, but rather uses the dataflow paradigm to explicitly construct graphs of operations required for the computations. This exposes the data that is required for each step of the computations, allowing groups of independent operations to be identified. Tensorflow then distributes these operations across the available cores to accelerate the computations. 
Python source code that implements the listed methods to solve a model problem can be found at Github\footnote{\url{https://github.com/remcovandermeer/Optimally-Weighted-PINNs}}. The numerical experiments presented in this work were computed on a Tesla K80 GPU provided by Google's Colaboratory project. 

\section{Experimental Results} 	\label{sec:results}

This section aims to experimentally compare the methods developed in this work to the original method introduced in the study of \cite{PhysInformedI} by applying all methods to solve several different PDEs. Because this work focuses on improving the problem formulation, rather than on fine-tuning hyperparameters of the neural networks and training algorithms, it makes sense to use small neural networks in conjunction with powerful training algorithms. Limiting the network capacity necessitates using this capacity efficiently, which will highlight the differences between the methods. By using powerful training algorithms, we ensure that the obtained solutions accurately reflect the properties of the methods of interest. The details regarding the neural networks and the training algorithms that were used are given in Section \ref{subsec:methodsetup}.

Two new methods were developed in work. Section \ref{sec:modification} introduced loss weights, with the aim to balance the priorities of the neural network, and derived an optimal choice for this loss weight. Section \ref{sec:normalization} introduced a heuristic method for approximating this optimum, which may be of value when limited information about the analytical solution is available. We label the original method \textit{Original}, and our new methods \textit{Optimal Loss Weight} and \textit{Magnitude Normalization}, respectively.

To probe the limits of these three methods, we use them to solve several different model PDEs, which are chosen such that they can easily be made more difficult. Section \ref{subsec:laplace} covers the first two PDEs that we consider: the Laplace and Poisson equations. Because these PDEs are well understood and easy to solve with traditional numerical methods, they are also easy to analyze. Furthermore, these PDEs are commonly used as testbeds. For instance, the study of \cite{2019arXiv190407200D}, as well as the study of \cite{1997physics...5023L} solve boundary value problems of the Poisson equation and report excellent accuracy. However, the problem instances that are solved in these studies are particularly simple. 
Since we aim to probe the limits of the three methods, more challenging variants of these PDEs are also considered here, including higher-dimensional problems and problems with peaks. 

Section \ref{subsec:convecdiffusion} treats the convection-diffusion equation. This PDE becomes extremely challenging to solve with traditional numerical methods if the diffusivity becomes small. As the diffusion rate approaches zero, the solutions start to exhibit boundary layers, which often require prohibitively fine grids and small time steps to solve. Decreasing the diffusion rate thus serves as the main tool to increase the difficulty of these problems.

\subsection{Laplace Equation} \label{subsec:laplace}

This section covers the Laplace equation, which describes the stationary heat equation and forms the basis of many model PDEs. As mentioned, this PDE is well understood and therefore serves as the starting point of the experimental analysis. Here, our focus is on analyzing behavior that is specific to neural networks. Traditional numerical methods are generally only dependent on the PDE and the grid size. However, since neural networks have limited learning capacity based on their size, one would expect that the difficulty of solving a PDE depends, among other things, on the complexity of the true solution. Therefore, choosing boundary conditions that lead to increasingly complex solutions forms the basis of probing the limits of the original and proposed methods. For a $d$-dimensional system, boundary value problems corresponding to the Laplace equation assume the form
\begin{equation}
\begin{cases}
\begin{aligned}
\nabla^2 u(\bm{x}) & = 0 	  &&\text{in } \Omega,\\
u(\bm{x}) & = G(\bm{x})\qquad &&\text{on } \partial\Omega.
\end{aligned}
\end{cases}
\label{eq:laplacedef}
\end{equation}
To keep the arithmetic simple, problems are considered on the $d$-dimensional unit hypercube with boundary conditions that correspond to the eigenfunctions of the Laplace equation, $\hat{u}(\bm{x}) \equiv  \prod_{i=2}^{d} \sin \omega_ix_ie^{-\omega_1 x_1}$,
with $\omega_i$ multiples of $\pi$ and $\omega_1 = \sqrt{\sum_{i=2}^{d}\omega_i^2}$. This gives rise to inhomogeneous boundary conditions at $x_1=0$ and $x_1=1$, and homogeneous boundary conditions on the remaining $2d-2$ boundary hyperplanes. High frequencies are expected to be challenging to learn compared to lower frequencies. Section \ref{subsubsec:laplace2d} covers the problem in two dimensions. Section \ref{subsubsec:lapalcend} covers the Laplace equation in up to six dimensions. 

\subsubsection{Two-Dimensional Problems} \label{subsubsec:laplace2d}

In two dimensions, eigenfunctions are characterized by a single eigenfrequency $\omega$. The three methods are compared for various frequencies ranging from $\omega=\pi$ to $\omega=10\pi$. For these problems, the optimal loss weight follows from the magnitude bounds of eq. \ref{eq:boundary_loss_bound}-\ref{eq:interior_loss_bound}, which are given by
\begin{align}
\begin{split}
	M_I(u) &= \int_\Omega \left[2\omega^2 e^{-\omega x_1}\sin \omega x_2 \right]^2 d\bm{x} = \omega^3(1 - e^{-2\omega}),\\	
	M_B(u) &= \int_{\partial\Omega} \left[e^{-\omega x_1}\sin \omega x_2\right]^2 d\bm{x}_\Gamma = \frac{1}{2}(1 + e^{-2\omega}).
\end{split}
\end{align}
The optimal loss weight is thus approximately given by $\lambda = \frac{M_B(\hat{u})}{M_B(\hat{u}) + M_I(\hat{u})} \approx \frac{1}{1 + 2\omega^3}$.
As a result, optimal loss weights range from approximately 1.58e-2 for $\omega=\pi$ to 1.61e-5 for $\omega=10\pi$. This forecasts significant differences between the original method given in \cite{PhysInformedI} and the proposed ones, especially for higher frequencies. 

To perform the training we use adaptive collocation point counts, starting with 2 interior and boundary collocation points. L-BFGS ran for 20,000 iterations. The relative $L_2$ and $L_\infty$ errors obtained by the three methods are given in table \ref{tab:laplaceres1}.  
\begin{table}[h!]
\footnotesize
	\centering
	\begin{tabularx}{\textwidth}{X||XX|XX|XX}
		\hline
		\multirow{2}{*}{$\omega$} & \multicolumn{2}{c|}{Original} & \multicolumn{2}{c|}{Optimal Loss Weight} & \multicolumn{2}{c}{Magnitude Normalization}\\
		& $L_2$ & $L_\infty$ & $L_2$ & $L_\infty$ & $L_2$ & $L_\infty$ \\\hline\hline
		$1\pi$  	& 9.07e-5 & 1.34e-4 & 2.76e-5 & 1.10e-4 & 2.38e-5 & 6.79e-5\\	
		$2\pi$  	& 1.12e-3 & 2.61e-3 & 6.36e-5 & 1.91e-4 & 2.38e-4 & 5.07e-4\\	
		$4\pi$ 		& 2.11e-2 & 6.44e-2 & 7.27e-4 & 4.63e-4 & 1.75e-3 & 1.31e-3\\	
		$6\pi$  	& 7.50e-1 & 9.77e-1 & 1.01e-3 & 3.39e-4 & 3.77e-3 & 1.50e-3 \\
		$8\pi$  	& 8.25e-1 & 1.21    & 1.84e-2 & 8.71e-3 & 5.95e-3 & 2.08e-3 \\	
		$10\pi$ 	& 2.44    &	1.65    & 1.53e-2 & 9.26e-3 & 7.05e-3 & 2.30e-3 \\\hline
	\end{tabularx}
	\caption{Relative $L_2$ and $L_\infty$ errors of the approximations that were obtained by the three methods for various frequencies. Problems with frequencies higher than $4\pi$ were not solved accurately by the original method. Magnitude normalization and optimal loss weights both resulted in significantly more accurate approximations.}
	\label{tab:laplaceres1}
\end{table}

Table \ref{tab:laplaceres1} shows that there is a significant difference in accuracy between the original method and the methods proposed in this work, which becomes even more pronounced for higher frequencies. In most cases, the proposed methods were at least one order of magnitude more accurate than the original.
Fig. \ref{fig:Laplace_10} illustrates this difference in accuracy for the frequency $\omega=10\pi$. The original method uses a loss weight that is several orders of magnitude larger than the theoretical optimum, and thus focuses excessively on satisfying the PDE. This is reflected by the shape of the obtained approximation: the boundary conditions were not satisfied, but the Laplacian of the solution was extremely small. 

\begin{figure}[h] 
	\centering
	\begin{subfigure}[t]{0.5\textwidth}
		\centering		
		\includegraphics[width=\linewidth]{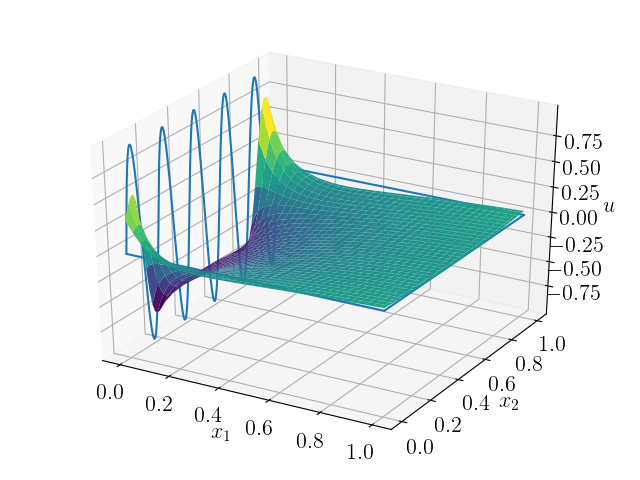}
		\vspace{-0.5cm}
		\subcaption{}
		\label{fig:Laplace_10_Default}
	\end{subfigure}%
	\begin{subfigure}[t]{0.5\textwidth}
		\centering
		\includegraphics[width=\linewidth]{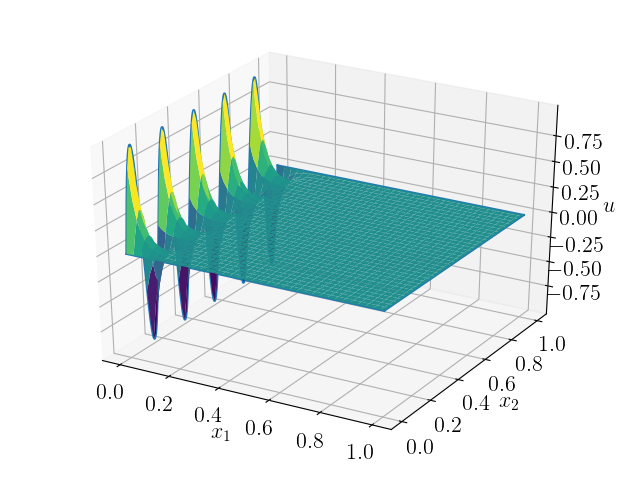}
		\vspace{-0.5cm}
		\subcaption{}
		\label{fig:Laplace_10_Mag}
	\end{subfigure}
	\caption{Boundary conditions and solutions of the two-dimensional Laplace equation with eigenfrequency $\omega=10\pi$ as obtained by the original method (fig. \ref{fig:Laplace_10_Default}) and magnitude normalization (fig. \ref{fig:Laplace_10_Mag}). The original method yielded a solution of which the loss function was completely dominated by the boundary loss, as one would expect based on the optimal value of the loss weight.}
	\label{fig:Laplace_10}
\end{figure}

To better understand the differences between the three methods, the values of the functions $L_I(u)$ and $L_B(u)$ during training are plotted in fig. \ref{fig:Laplace_10_Train}. This highlights that our proposed methods do not yield solutions with lower losses than the original method; instead, the interior loss is sacrificed to better approximate the boundary. The figure also shows that the training behavior of magnitude normalization and optimal loss weights are quite similar. There are two key differences between these two methods. Firstly, the optimal loss weight method uses the optimal loss weight from the very first iteration, while magnitude normalization must first \textit{discover} the overall shape of the true solution before the optimal loss weight is approximated. This is reflected by the loss profile, which shows a slower initial decrease of the boundary loss and a slower initial increase of the interior loss for magnitude normalization compared to optimal loss weights. Secondly, the loss profile of magnitude normalization has significantly larger spikes. The reason for this lies in the additional ways this method has to reduce the total loss: next to directly reducing the interior and boundary losses, the magnitudes can also be increased. Increasing the magnitudes locally to reduce the overall loss generally results in overfitting. Though overfitting was accounted for by using adaptive point counts, the resulting spikes remain visible.

\begin{figure}[h] 
	\centering
	\begin{subfigure}[t]{0.5\textwidth}
		\centering		
		\includegraphics[width=\linewidth]{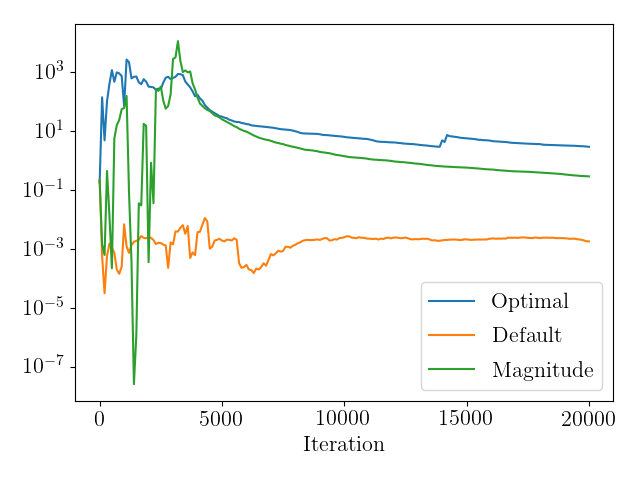}
		\vspace{-0.5cm}
		\subcaption{}
		\label{fig:Laplace_10_Train_Int}
	\end{subfigure}%
	\begin{subfigure}[t]{0.5\textwidth}
		\centering
		\includegraphics[width=\linewidth]{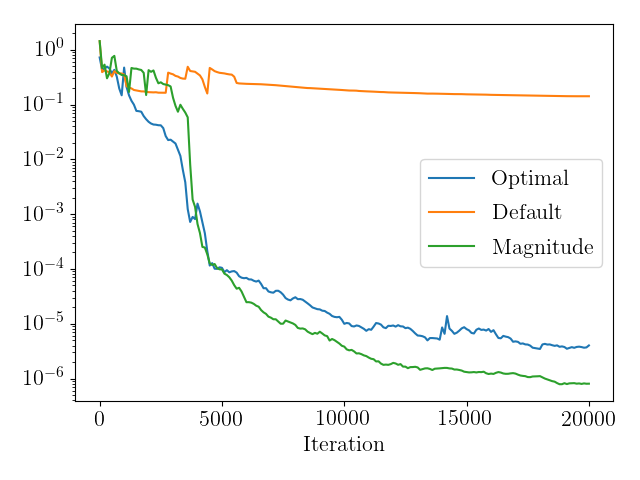}
		\vspace{-0.5cm}
		\subcaption{}
		\label{fig:Laplace_10_Train_Bound}
	\end{subfigure}
	\caption{Loss profiles of the different methods during training for the problem with frequency $\omega=10\pi$. \mbox{Fig \ref{fig:Laplace_10_Train_Int}} shows the interior loss, and fig. \ref{fig:Laplace_10_Train_Bound} shows the boundary loss. Even though the original method resulted in a very inaccurate approximation, the interior loss was much smaller, showing that it is important to assign the right weight to the different loss terms. Note that the neural networks were trained to minimize the logarithm of these losses.}
	\label{fig:Laplace_10_Train}
\end{figure}

To gain some insight in the adaptive point counts that were used to obtain the results, the number of collocation points used for the extreme frequencies are given in table \ref{tab:laplaceresPoints}. This table highlights the strong dependency of the required point counts on the complexity of the true solution, as well as the differences between the three methods. In particular, observe how magnitude normalization tends to use more collocation points. This is caused by its tendency to overfit, as stated earlier.

\begin{table}[h!]
	\centering
	\begin{tabularx}{\textwidth}{X||XX|XX|XX}
		\hline
		\multirow{2}{*}{$\omega$} & \multicolumn{2}{c|}{Original} & \multicolumn{2}{c|}{Optimal Loss Weight} & \multicolumn{2}{c}{Magnitude Normalization}\\\cline{2-7}
		& Interior & Boundary & Interior & Boundary & Interior & Boundary \\\hline\hline
		$1\pi$  	& 512  & 64   & 128  & 128   & 512  & 64   \\		
		$10\pi$  	& 4096 & 128 & 4096 & 512  & 8192 & 512   \\\hline
	\end{tabularx}
	\caption{Adaptive point counts used by the three methods for the extreme problem frequencies. The higher frequency problem required significantly more collocation points, as expected. }
	\label{tab:laplaceresPoints}
\end{table}

We remark that these point counts are increased over the course of the training algorithm, and are therefore tied to the iteration count. Underfitting typically results in a very low point count, while overfitting tends to result in higher point counts. In table \ref{tab:laplaceresPoints} this is reflected in the boundary points that were used by the original method for the frequency $\omega=10\pi$, which severely underfitted the boundary conditions.

Despite small differences in training behavior and collocation point counts, the two proposed methods performed reasonably well for even the most difficult problem tested. However, a decline in accuracy can be observed in table \ref{tab:laplaceres1} for higher frequency problems. This makes sense, as more complex solutions are typically more difficult to learn with neural networks. Typically, either the network capacity must be increased, or the capacity must be utilized better to maintain the same level of accuracy. Since we increased neither the network size nor the number of iterations, a drop in accuracy is expected. Conversely, one would expect accuracy improvements when using larger networks or when training over more iterations.

As observed in table \ref{tab:laplaceres1}, increasing the frequency leads to a drop in accuracy. To confirm that accuracy improvements can be achieved by increasing the network size or the training iterations, the problem with frequency $\omega=10\pi$ is solved with the three methods using a neural network with five layers of 50 neurons. Here, L-BFGS with adaptive points, starting with 2 interior and boundary collocation points, was performed for 50,000 iterations. The resulting errors and final collocation point counts are given in table \ref{tab:laplaceres1_refined}. The collocation point counts generally exceeded the number of points used with the smaller network; this likely resulted from the bigger network being slightly more prone to overfitting. 

\begin{table}[h!]
\footnotesize
	\centering
	\begin{tabular}{l||c|c|c}
		\hline
		& Original & Optimal Loss Weight & Magnitude Normalization\\\hline\hline
		Relative $L_2$ error	   & 9.61e-1 & 1.04e-3 & 1.03e-3\\
		Relative $L_\infty$ error  & 1.13	 & 6.43e-4 & 6.81e-4\\\hline	
		Interior point count       & 16,384  & 16,384  & 32,768\\
		Boundary point count       & 64      & 512     & 512\\\hline
	\end{tabular}
	\caption{The problem with frequency $\omega=10\pi$ solved by the three different methods, using a neural network with five layers of 50 neurons. L-BFGS was run for 50,000 iterations with adaptive point counts, starting with 2 interior and boundary collocation points. The proposed methods achieved excellent accuracy, exceeding the accuracy obtained by the original method when solving the problem with frequency $\omega=2\pi$ using a smaller network.}
	\label{tab:laplaceres1_refined}
\end{table}

To end this section, we briefly consider $\epsilon$-closeness, which forms the foundation on which the proposed methods are built. As mentioned, $\epsilon$-closeness likely only holds approximately. To examine this, the loss and error of the solution obtained by the unscaled method using optimal loss weights are depicted in \mbox{fig. \ref{fig:Laplace_10_ErrorDistribution}}. This figure highlights that while $\epsilon$-closeness is not satisfied exactly, as errors are nonzero where the true solution zeros out, the overall distributions of the errors are shaped similarly to the true solution. Both the loss function, which measures the errors of the second derivatives, and the absolute errors decay significantly in $x_1$, just like the true solution, showing that $\epsilon$-closeness can be a useful concept even in practice.

\begin{figure}[h!] 
	\centering
	\begin{subfigure}[t]{0.5\textwidth}
		\centering		
		\includegraphics[width=\linewidth]{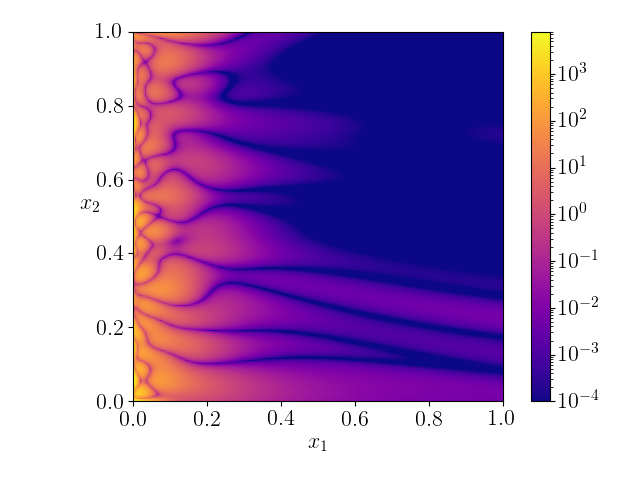}
		\vspace{-0.5cm}
		\subcaption{}
		\label{fig:Laplace_10_Loss}
	\end{subfigure}%
	\begin{subfigure}[t]{0.5\textwidth}
		\centering
		\includegraphics[width=\linewidth]{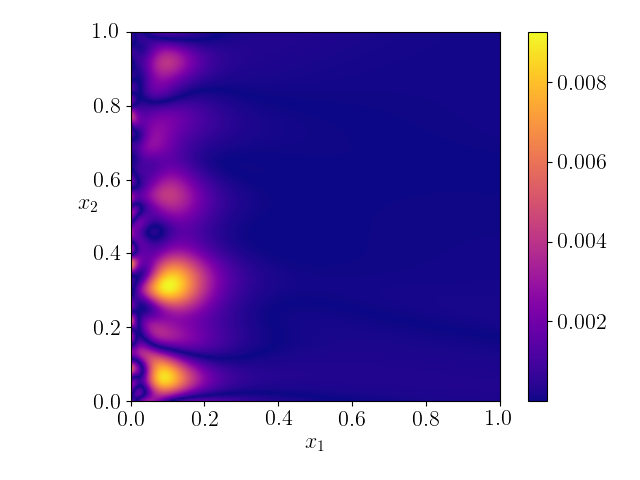}
		\vspace{-0.5cm}
		\subcaption{}
		\label{fig:Laplace_10_Error}
	\end{subfigure}
	\caption{Loss (fig. \ref{fig:Laplace_10_Loss}) and error (fig. \ref{fig:Laplace_10_Error}) distributions of the solution of the problem with frequency $\omega=10\pi$, obtained by using optimal loss weights. Notice how both the loss and the error vanish as the solution becomes flat, highlighting that although $\epsilon$-closeness does not hold exactly, it can be used to predict the overall behavior of the loss function. }
	\label{fig:Laplace_10_ErrorDistribution}
\end{figure}

\subsubsection{Higher-Dimensional Problems} \label{subsubsec:lapalcend}

With basic results established, let us turn to higher-dimensional problems, as they form one of the core motivations for the development of neural network based solvers. As in the previous section, the eigenfunctions are used to generate boundary conditions with easy-to-analyze solutions. Using the Laplace eigenfunctions, $d-1$ frequencies can be chosen freely, while $\omega_1$ is determined by the other frequencies. To prevent the solutions from becoming exponentially complex, $d-2$ frequencies are held constant at $\omega_i = \pi$, and the frequency $\omega_2$ is set to $4\pi$. The focus of this section is on examining the effects of the dimensionality. The results for different dimensionalities are given in table \ref{tab:laplaceres2}. Here, L-BFGS with adaptive point counts, starting with 2 interior and boundary collocation points, was used. 

\begin{table}[h!]
\footnotesize
	\centering
	\begin{tabularx}{\textwidth}{X||XX|XX|XX}
		\hline
		\multirow{2}{*}{$d$} & \multicolumn{2}{c|}{Original} & \multicolumn{2}{c|}{Optimal Loss Weight} & \multicolumn{2}{c}{Magnitude Normalization}\\\cline{2-7}
		& $L_2$ & $L_\infty$ & $L_2$ & $L_\infty$ & $L_2$ & $L_\infty$ \\\hline\hline
		3  	& 2.00e-1 & 4.35e-1 & 2.02e-3 & 4.22e-3 & 2.66e-3 & 4.21e-3\\	
		4  	& 7.29e-1 & 9.68e-1 & 1.00e-2 & 2.49e-2 & 8.66e-3 & 3.54e-2\\	
		5 	& 9.84e-1 & 1.01    & 1.22e-2 & 2.31e-2 & 1.47e-2 & 2.22e-2\\	
		6  	& 1.06    & 9.84e-1 & 4.31e-2 & 4.13e-2 & 4.32e-2 & 4.38e-2\\\hline
	\end{tabularx}
	\caption{Relative $L_2$ and $L_\infty$ errors of the approximations that were found by the three methods for different dimensionalities. The original method has trouble solving even the three-dimensional problem, while the proposed methods gradually become less accurate as the dimensionality grows.}
	\label{tab:laplaceres2}
\end{table}

Adaptive point counts never resulted in more than 16,384 interior or boundary collocation points, even for six-dimensional problems, indicating that high-dimensional problems do not necessarily require massive amounts of data to be solved accurately; instead, this likely only depends on the complexity of the solution. This highlights a big advantage of neural network based methods over traditional numerical methods, which would suffer from an exponential increase in computational cost. 

Like in the previous section, results degraded as the problem difficulty was increased. We again attribute this to the constant network size and number of iterations, which in practice ought to be increased for more difficult problems. To verify this, the six-dimensional problem was solved with a neural network with five layers of 50 neurons over 100,000 iterations, using magnitude normalization with adaptive point counts, starting with 2 interior and boundary collocation points. The obtained solution had a relative $L_2$ error of 5.99e-3 and a relative $L_\infty$ error of 9.88e-3. This again shows that the accuracy can be improved by increasing the available computational resources. 

\subsection{Poisson Equation}

This section covers the Poisson equation, which is the extension of the Laplace equation with a source function. This PDE is analyzed in order to assess the effects of source functions on the proposed methods. The main aim of this section is to show that $\epsilon$-closeness, which by construction does not depend on source functions, is a useful concept even for inhomogeneous PDEs. The Poisson equation has also been studied in other works on neural network based PDE solvers, including the studies of \cite{2019arXiv190407200D, 1997physics...5023L, Xu2018DeepLF}. However, the problems that are solved accurately in these studies are generally characterized by very smooth solutions, whereas our interest lies in solving more complicated problems, as we did in Section \ref{subsubsec:laplace2d}. Only the study of \cite{Xu2018DeepLF} considers a problem with less regular behavior, but the authors did not manage to solve it accurately.

As in the previous section, we aim to solve increasingly complex problems in order to probe the limits of the proposed methods. We restrict ourselves to the 2-dimensional case. In the first part of this section, we consider source functions corresponding to the eigenfunctions of this PDE. These problems can be made more difficult by increasing the frequency. In the second part of this section we solve the problem with a peak source function that was studied but not solved in \cite{Xu2018DeepLF}. 

\subsubsection{Oscillating Solutions}
For a $2$-dimensional system, boundary value\\
 problems corresponding to the Poisson equation can be defined by
\begin{equation}
\begin{cases}
\begin{aligned}
\nabla^2 u(x, y) & = F(x, y)	  &&\text{in } \Omega,\\
u(x, y) & = G(x, y)\qquad &&\text{on } \partial\Omega.
\end{aligned}
\end{cases}
\label{eq:poissondef}
\end{equation}
We consider problems on the unit square $\Omega=[0, 1]\times[0, 1]$. We first consider the eigenfunction problems, with solutions given by $\hat{u}(x, y) = \cos(\omega\pi x)\sin(\omega\pi y)$.
These solutions are prescribed as Dirichlet boundary conditions on the entire boundary of the domain. The source functions that correspond to these eigenfunctions are given by $F(x, y) = -2\omega^2 \cos(\omega\pi x)\sin(\omega\pi y)$. 
This set of problems is parametrized by the frequency $\omega$, which can be utilized to control the difficulty. Similar to the Laplace equation, high frequency problems are expected to be more challenging to solve with these methods than low frequency problems. In addition to this, the optimal loss weights vary based on the frequency. For these problems, the optimal weights follow from 
\begin{align}
M_I(\hat{u}) &= \int_\Omega \left[2\omega^2 \cos(\omega x_1)\sin(\omega x_2)\right]^2 d\bm{x} = \omega^4,\\	
M_B(\hat{u}) &= \int_{\partial\Omega} \left[\cos(\omega x_1)\sin(\omega x_2)\right]^2 d\bm{x}_\Gamma = 1,
\end{align}
such that
\begin{align}
\lambda = \frac{M_B(\hat{u})}{M_B(\hat{u}) + M_I(\hat{u})} = \frac{1}{1 + \omega^4}.
\end{align}
This dependency of $\lambda$ on $\omega$ is even stronger than it was for the Laplace equation, which is why we consider a smaller range of frequencies, given by $\omega\in[\pi, 6\pi]$. For this range of frequencies, the optimal loss weights vary between $\lambda\approx1.02$e-2 for $\omega=\pi$ to $\lambda\approx7.90$e-6 for $\omega=6\pi$. Based on these optimal values, one would again expect significant differences between the three methods for higher frequencies. The problems were solved using the three methods using adaptive collocation point counts, starting with 2 interior and boundary collocation points. 20,000 iterations were performed. Table \ref{tab:poissonres1} contains the relative errors of the obtained approximations. The final numbers of collocation points used for the extreme frequencies are given in \mbox{table \ref{tab:poissonPoints}}. 

\begin{table}[h!]
\footnotesize
	\centering
	\begin{tabularx}{\textwidth}{X||XX|XX|XX}
		\hline
		\multirow{2}{*}{$\omega$} & \multicolumn{2}{c|}{Original} & \multicolumn{2}{c|}{Optimal Loss Weight} & \multicolumn{2}{c}{Magnitude Normalization}\\\cline{2-7}
		& $L_2$ & $L_\infty$ & $L_2$ & $L_\infty$ & $L_2$ & $L_\infty$ \\\hline\hline
		$1\pi$ & 6.66e-4 & 3.51e-3 & 3.47e-5 & 1.86e-4 & 2.82e-5 & 1.16e-4\\	
		$2\pi$ & 3.01e-2 & 1.14e-1 & 1.99e-4 & 5.28e-4 & 2.47e-4 & 5.35e-4\\	
		$4\pi$ & 4.56e-1 & 8.57e-1 & 2.94e-2 & 7.36e-2 & 7.74e-3 & 1.21e-2\\	
		$6\pi$ & 9.82    & 1.27e1  & 2.43e-2 & 1.08e-1 & 3.71e-2 & 5.03e-2 \\\hline
	\end{tabularx}
	\caption{Relative $L_2$ and $L_\infty$ errors of the approximations obtained by the three methods for different $\omega$. The original method failed to solve problems with frequencies of $4\pi$ or higher, while the proposed methods could solve all considered problems. The decline in accuracy for higher frequency problems can likely be explained by the increased complexity of the solutions of these problems.}
	\label{tab:poissonres1}
\end{table}

\begin{figure}[h!] 
\centering
\begin{subfigure}[t]{0.5\textwidth}
	\centering		
	\includegraphics[width=\linewidth]{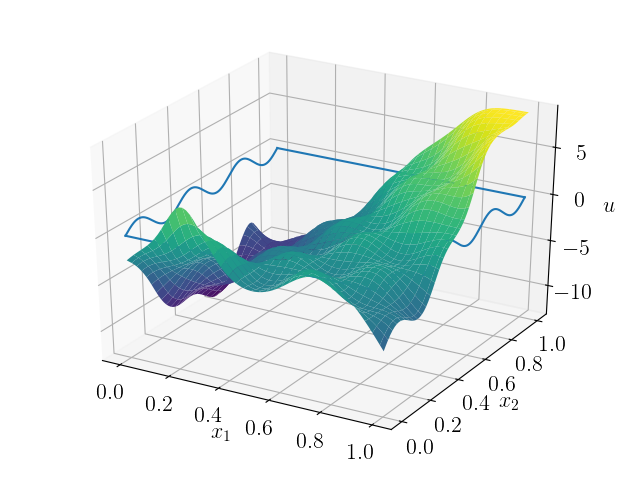}
	\vspace{-0.5cm}
	\subcaption{}
	\label{fig:Poisson_6_Default}
\end{subfigure}%
\begin{subfigure}[t]{0.5\textwidth}
	\centering
	\includegraphics[width=\linewidth]{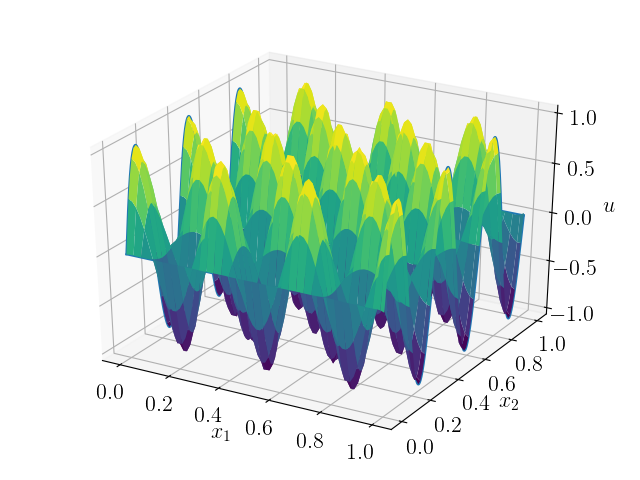}
	\vspace{-0.5cm}
	\subcaption{}
	\label{fig:Poisson_6_Magnitude}
\end{subfigure}
\caption{The solutions to the problem with frequency $\omega=6\pi$ obtained by the original method (fig. \ref{fig:Poisson_6_Default}) and by magnitude normalization (fig. \ref{fig:Poisson_6_Magnitude}). The original method arrived at a solution that satisfies the PDE quite well in the interior of the domain. However, the boundary conditions were practically ignored. In contrast, magnitude normalization yielded a much more accurate solution that satisfied both the boundary conditions and the PDE itself reasonably well.}
\label{fig:Poisson_6}
\end{figure}

These results are similar in nature to our results of the Laplace equation; as expected, the accuracy of the original method drops off much faster than the accuracy of the proposed methods. The approximated solutions obtained by respectively the original and the magnitude normalized methods for the problem with frequency $\omega=6\pi$ are depicted in fig. \ref{fig:Poisson_6}. 
We furthermore observed that the large error of the original method again arises mainly from the boundary conditions not being satisfied, while the two proposed methods were able to fit the boundary conditions, even for the highest frequency considered here. For the Laplace equation, the original method resulted in a solution that interpolated the boundary conditions. However, the Poisson equation is overall more challenging to satisfy than the Laplace equation, as is reflected in the obtained accuracy. As a result, the original method essentially ignored the boundary conditions, since they barely contributed to the total loss. In contrast to the results of the original method, both proposed methods yielded approximations that did not visibly deviate from the true solution, even for the highest frequency considered here. Another difference compared to the results of the Laplace equation is the behavior of adaptive collocation point counts. Although it is not clear why, the presence of a source function seems to increase the rate at which the methods overfit the data, which increases the number of interior collocation points used. This effect was most notable for lower frequencies. The numbers of boundary collocation points used are consistent with the results of the Laplace equation. 

To conclude this section, our results thus show that the concept of $\epsilon$-closeness remains valid even in the presence of source functions. Both optimally weighted method and magnitude normalization make it possible to solve problems with much greater derivatives.
\begin{table}[h!]
\tiny
	\centering
	\begin{tabularx}{0.75\textwidth}{|l||YY|YY|YY|}
		\hline
		\multirow{2}{*}{$\omega$} & \multicolumn{2}{c|}{Original} & \multicolumn{2}{c|}{Optimal Loss Weight} & \multicolumn{2}{c|}{Magnitude Normalization}\\\cline{2-7}
		& Interior & Boundary & Interior & Boundary & Interior & Boundary \\\hline\hline
		$1\pi$  	& 1024 & 64 & 1024 & 64 & 512 & 64 \\		
		$6\pi$  	& 1024 & 8 & 2048 & 256 & 1024 & 256 \\\hline
	\end{tabularx}
	\caption{Adaptive point counts used by the three methods for the extreme problem frequencies. Source functions seem to increase the required number of collocation points. Note that the number of boundary collocation points that the original method used for the high-frequency problem was extremely low. This is indicative of a too low focus on the respective loss term: even as few as eight points did not lead to overfitting. }
	\label{tab:poissonPoints}
\end{table}

Our results thus show that the concept of $\epsilon$-closeness remains valid even in the presence of source functions. Both optimally weighted method and magnitude normalization make it possible to solve problems with much greater derivatives. However, we remark that source functions do affect the stability of magnitude normalization. This did not affect the results presented in this subsection, but does become problematic when one aims to solve problems with even higher frequencies. For such high frequencies, convergence to the true solution is contingent on the initialization of the neural network. In our experiments, these instabilities could always be overcome by using some form of pre-training, i.e. finding a set of network parameters which result in lower loss than the random initialization. It is beyond the scope of this work to address this in detail. 

\subsubsection{Source Functions with Peaks}
Next, we consider the problem mentioned earlier that has been studied in \cite{Xu2018DeepLF}, however without a convincing solution so far. As mentioned, this problem is characterized by the peak in the source function, which causes a peak in the solution. The problem is defined by its solution, which is chosen as 
\begin{equation}
u(x, y) = \sin(\pi x) + e^{-1000\left((x-\frac{1}{2})^2 + (x-\frac{1}{2})^2\right)} -\frac{1}{2}.
\end{equation}
The optimal loss weight of this problem is approximately given by $\lambda\approx 4.45$e-5, which suggests that the proposed methods might be better suited to solving this problem than the original method. To perform the experiments, we again use adaptive collocation point counts, starting with 2 interior and boundary collocation points. For each method we performed 20,000 L-BFGS iterations. The results of the three methods, as well as the final numbers of collocation points, are given in table \ref{tab:poissonres2}.

\begin{table}[h!]
\footnotesize
	\centering
	\begin{tabular}{l||c|c|c}
		\hline
		& Original 	& Optimal Loss Weight    & Magnitude Normalization\\\hline\hline
		relative $L_2$ error		& 1.24e-1 & 4.01e-3 & 2.08e-3\\		
		relative $L_\infty$ error  	& 7.56e-2 & 2.49e-3 & 2.94e-3 \\\hline			
		Interior point count		& 8192 & 8192 			  & 8192\\			
		Boundary point Count		& 8    & 64 			  & 512\\\hline	
	\end{tabular}
	\caption{Relative $L_2$ errors of the approximations obtained for the problem with a peak. The original method is outperformed by the two proposed methods. Note that magnitude normalization used a large number of boundary collocation points. This was caused by an instability that occurred during early training, but was eventually overcome by the method. }
	\label{tab:poissonres2}
\end{table}

Overall, these results line up with the expectations. However, the difference between the three methods is fairly small, despite the very small optimal value of the loss weight. We suspect that the reason that this difference is so small originates from the different characteristics present in the solution; during the early training, the few collocation points present are likely only able to capture the low frequency part of the solution, leading to early approximations that already satisfy the boundary conditions. When more collocation points are added, the peak starts affecting the loss function, which causes the original method to neglect the boundary conditions from then on. However, at this stage, the neural network already fits the boundary conditions rather well. All three methods yielded significantly better results than were obtained in \cite{Xu2018DeepLF}, showing that our approach is promising. 

\begin{figure}[h!] 
	\centering
	\begin{subfigure}[t]{0.5\textwidth}
		\centering		
		\includegraphics[width=\linewidth]{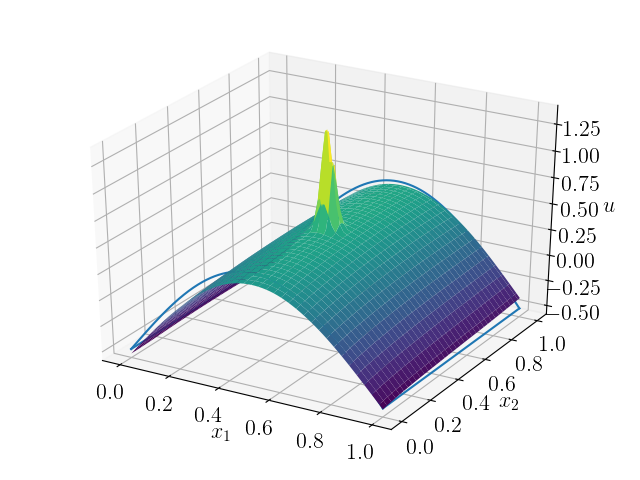}
		\vspace{-0.5cm}
		\subcaption{}
		\label{fig:Poisson_Peak_Default}
	\end{subfigure}%
	\begin{subfigure}[t]{0.5\textwidth}
		\centering
		\includegraphics[width=\linewidth]{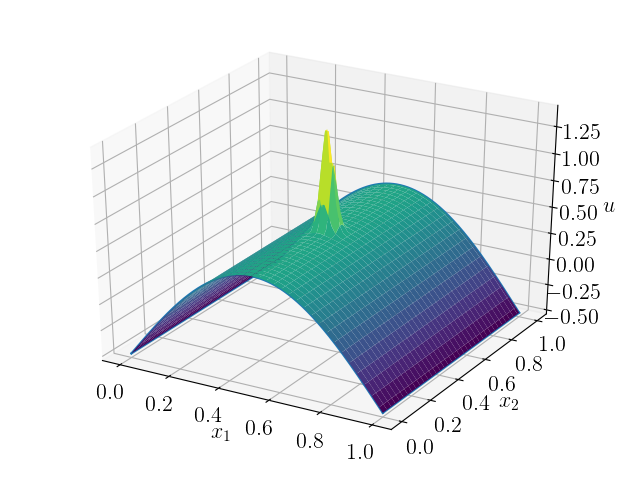}
		\vspace{-0.5cm}
		\subcaption{}
		\label{fig:Poisson_Peak_Magnitude}
	\end{subfigure}
	\caption{The solutions to the problem with a peak obtained by the original method (fig. \ref{fig:Poisson_Peak_Default}) and by magnitude normalization (fig. \ref{fig:Poisson_Peak_Magnitude}). Both approximations have captured the overall behavior of the true solution, and satisfy the PDE in the interior of the domain quite well. However, the original method did not accurately satisfy the boundary conditions.}
	\label{fig:Poisson_Peak}
\end{figure}

To highlight the difference between three methods of interest, the approximations obtained by the original method and by magnitude normalization are depicted in fig. \ref{fig:Poisson_Peak}. Here one can observe that both methods were able to capture the overall behavior of the solution. Furthermore, the figure clearly shows that the error of the original method is caused by a poor satisfaction of the boundary conditions. The behavior of the obtained approximation in the interior of the domain perfectly aligns with the source function. The solution obtained by magnitude normalization did not deviate visibly from the true solution. These results show that these methods are capable of discovering solutions with irregular behavior. To further highlight this, the error of the approximation obtained with magnitude normalization is depicted in fig. \ref{fig:Poisson_Peak_Error}. This figure shows that the error distribution is smooth, even in the region of the peak of the solution. 

\begin{figure}[h!] 
	\centering
	\includegraphics[width=0.5\linewidth]{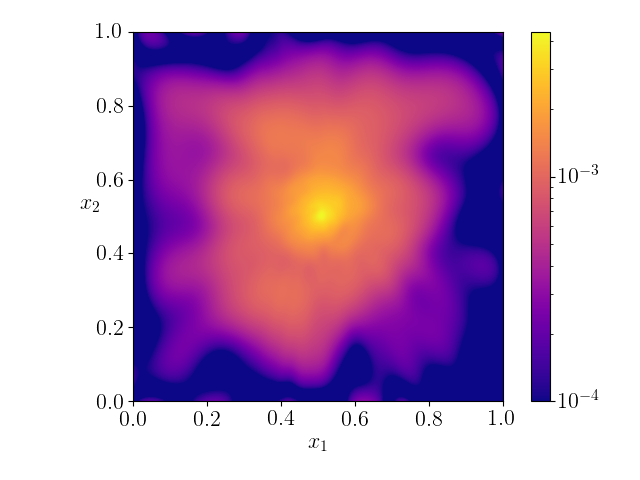}
	\vspace{-0.5cm}
	\caption{The error of the approximation obtained by using magnitude normalization. The distribution of the error is smooth, indicating that no artifacts were introduced by the neural network to handle the sudden change in behavior near the peak. }
	\label{fig:Poisson_Peak_Error}
\end{figure}

\subsection{Convection-Dominated Convection-Diffusion Equation} \label{subsec:convecdiffusion}

So far only nicely elliptic PDEs have been discussed. The experimental results suggest that for hyperbolic PDEs, choosing the loss weight according to the theoretical optimum can have a significant positive effect on the accuracy of these methods. Magnitude normalization, which has qualitatively different behavior, generally results in similar accuracy improvements, but can also be unstable. These instabilities reveal an underlying property of this method: it acts as a driving force towards larger derivatives.

It turns out that this quirk can be exploited to solve a certain class of problems: singularly perturbed problems. These problems are typically challenging to solve with traditional numerical methods because of extreme differences in the behavior of the solutions. One instance of this class of problems is the stationary convection-dominated convection-diffusion equation. This PDE gives rise to solutions with boundary layers, which are small regions where the solution has dramatically different behavior compared to the rest of the solution. For this particular PDE, boundary layers are characterized by extremely large gradients. In this section we briefly show that the tendency of magnitude normalization to find solutions with large gradients makes this method a promising candidate for solving problems with boundary layers. 

In one dimension, the stationary convection-diffusion equation is given by
\begin{equation*}
\begin{cases}
\begin{aligned}
v \frac{d u}{dx} + \alpha \frac{d^2 u}{dx^2} & = 0 &&\text{in } \Omega,\\
u(x) & = G(x) &&\text{on } \partial\Omega.
\end{aligned}
\end{cases}
\end{equation*}
We examine this PDE on the domain $x\in[0, 1]$ with $v=1$, subject to the boundary conditions $u(0) = \frac{1}{2}$, $u(1) = -\frac{1}{2}$. The analytical solution of these problems take the form
\begin{equation}
	u(x) = \frac{e^{-\frac{vx}{\alpha}}}{1 - e^{-\frac{v}{\alpha}}} - \frac{1}{2}.\label{eq:convecdif1d_analytical}
\end{equation}
Eq. \ref{eq:convecdif1d_analytical} implies that the size of the boundary layer is proportional to the diffusivity $\alpha$. Therefore, one would expect that these problems more are challenging to solve as $\alpha\rightarrow 0$. Thus, reducing the diffusivity provides an excellent tool to tune the difficulty of the problem. The three methods are compared for various $\alpha$, ranging from $\alpha=10^{-1}$ down to $\alpha=10^{-4}$. Here, we use adaptive point counts, starting with 2 interior points. The 2 boundary collocation points fully cover the boundary domain, and are hence kept constant. We performed 20,000 iterations for each problem. The results are given in table \ref{tab:statconvecdiffres1}. 

\begin{table}[h!]
\footnotesize
	\centering
	\begin{tabularx}{\textwidth}{X||XX|XX|XX}
		\hline
		\multirow{2}{*}{$\alpha$} & \multicolumn{2}{c|}{Original} & \multicolumn{2}{c|}{Optimal Loss Weight} & \multicolumn{2}{c}{Magnitude Normalization}\\\cline{2-7}
		& $L_2$ & $L_\infty$ & $L_2$ & $L_\infty$ & $L_2$ & $L_\infty$ \\\hline\hline
		$10^{-1}$ & 1.25e-8 & 2.28e-8 & 1.48e-6 & 3.02e-6 & 4.45e-8 & 8.87e-8\\	
		$10^{-2}$ & 1.50e-7 & 4.11e-7 & 2.34e-6 & 4.63e-6 & 2.87e-7 & 5.80e-7\\	
		$10^{-3}$ & 1.02	& 1.14 & 1.14 & 1.98 & 9.25 & 4.56e1\\	
		$10^{-4}$ & 1.01    & 1.20 & 1.15 & 2.00 & 1.91 & 3.51e1 \\\hline
	\end{tabularx}
	\caption{Relative $L_2$ and $L_\infty$ errors of the approximations that were obtained by the three methods for various $\alpha$. For $\alpha\leq 10^{-3}$, no good approximations were found. For larger diffusivity, all three methods yielded very accurate approximations.}
	\label{tab:statconvecdiffres1}
\end{table}

The results given in Table \ref{tab:statconvecdiffres1} show that for $\alpha\geq 10^{-2}$, all three methods yielded very accurate results. Remarkably, the original method outperformed the proposed methods. We suspect that due to the extreme variance of the true solution, a single loss weight is not sufficient to simultaneously deal with the different behaviors inside and outside of the boundary layer. If this is indeed the issue, a local loss weight $\lambda(\bm{x})$ would likely improve the results. However, that is beyond the scope of this work. For smaller $\alpha$, none of the three methods were able to obtain accurate results. 

We will show that it only takes a small modification for magnitude normalization to solve these problems. In particular we observed that magnitude normalization with adaptive point counts resulted in a very large number of collocation points. This was caused by magnitude normalization being unstable for this problem. Eliminating instabilities is not straightforward, and this process typically depends on the nature of the instability. We already mentioned that pre-training is a valid way of overcoming instabilities. However, it may also be helpful to increase the initial number of collocation points, as this prevents the method from introducing spikes into the solution between collocation points. For the particular problem with $\alpha=10^{-4}$, it seems to be sufficient: starting the algorithm with 256 interior collocation points resulted in a relative $L_2$ error of 1.41e-4, and a relative $L_\infty$ error of 8.78e-4. Here, adaptive point counts ultimately resulted in 65,536 interior collocation points, which is a fairly small number considering the size of the boundary layer. This suggests that the poor accuracy of magnitude normalization as shown in table \ref{tab:statconvecdiffres1} was indeed caused by instabilities. The approximated solution when starting with 256 interior collocation points is depicted in fig. \ref{fig:Convec1d_BFGS_Mag_0001}.

\begin{figure}[h!] 
	\centering
	\begin{subfigure}[t]{0.5\textwidth}
		\centering		
		\includegraphics[width=\linewidth]{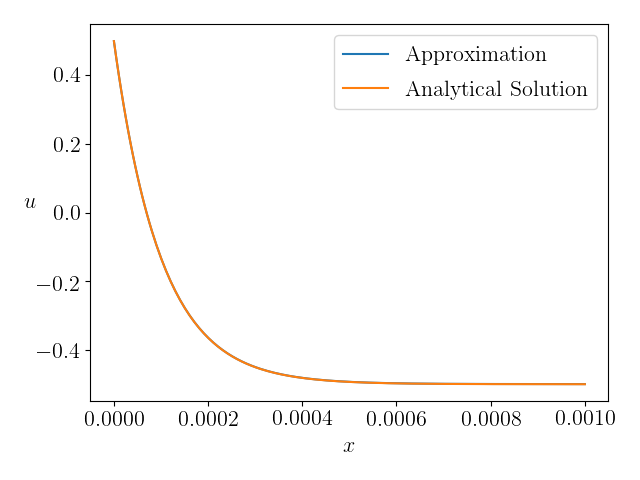}
		\vspace{-0.5cm}
		\subcaption{}
	\end{subfigure}%
	\begin{subfigure}[t]{0.5\textwidth}
		\centering
		\includegraphics[width=\linewidth]{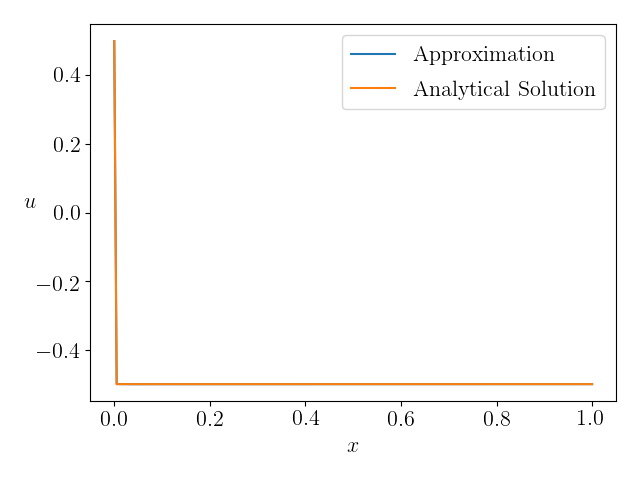}
		\vspace{-0.5cm}
		\subcaption{}
	\end{subfigure}
	\caption{The approximation obtained using magnitude normalization with 256 initial interior collocation points. Even in the boundary layer, no deviations from the true solution are visible.}
	\label{fig:Convec1d_BFGS_Mag_0001}
\end{figure}

We remark that increasing the initial number of collocation points does not enable the other two methods to solve the problems with small $\alpha$. It only slightly improved the obtained accuracy for the problems with large $\alpha$, at significant computational cost. 

\section{Discussion and Conclusion}	\label{sec:conclusions}

The study of \cite{PhysInformedI} introduces an unsupervised learning method that solves PDEs by simultaneously minimizing two objective functions: one encoding the boundary conditions, and one encoding the PDE itself. This method lends itself to solving certain types of problems more efficiently than traditional numerical solvers. For example, neural networks do not suffer from the curse of dimensionality, as they are able to recognize low-dimensional features in high-dimensional space. Furthermore, this method is likely well suited to solving problems for which the underlying geometry is challenging to discretize.

In this work, we generalized this method by introducing a loss weight which compensates for potential imbalances in these two objective functions, which the original method simply adds together with equal weight. To derive an optimal value for this loss weight, the notion of $\epsilon$-closeness was introduced in Section \ref{sec:modification}. This concept was used to predict the deviations of a neural network approximation and its derivatives from the target function, and allowed us to express an optimal value for the loss weight in terms of the true solution of a PDE. We also derived a heuristic method to approximate this loss weight in terms of the approximated solution, which we coined \textit{magnitude normalization}. Furthermore, we introduced a method of adaptively updating the number of collocation points based on the training loss and a test loss.

The significance of $\epsilon$-closeness and the accompanying optimal loss weights was showcased in Section \ref{sec:results}, which contains several model problems that were specifically constructed to have imbalanced objective functions. Using our proposed methods, much better accuracy was obtained for most problems we considered. This shows that neural network based PDE solvers have inherently different behavior compared to traditional numerical methods; their accuracy does not only depend on the PDE one aims to solve, but also on the complexity of the solution. Magnitude normalization was also shown to have useful properties for solving singularly perturbed problems. 

We must, however, mention that magnitude normalization in its current form can be sensitive to the initialization of the neural network, and may be unstable for more difficult problems. These instabilities pose challenges that remain to be solved in future work. The singularly perturbed problems that we considered also indicate that using constant loss weights may not always be ideal, in particular when the solution is characterized by changing behavior. In these cases, a local loss weight $\lambda(\bm{x})$ might prove useful. Such a local loss weight can likely be derived from $\epsilon$-closeness, since this is a local concept. Local loss weights might even be the key to stabilizing magnitude normalization. 

Other open problems that remain have already been identified by the authors of the original study of \cite{PhysInformedI}: error bounds and convergence guarantees are currently non-existent. Given the computational cost of training a neural network, we agree that these methods should not yet be used to solve a single problem instance of a PDE for which traditional methods are available. However, these methods can likely be used to solve parametrized families of problems. While the computational costs involved in training a neural network are large, neural networks are typically cheap to evaluate. Being able to solve many PDEs at once would hence provide a computational reason to prefer neural network based methods over traditional solvers. 

The progress made in this work should bring us one step closer to being able to solve parametrized problems: the theoretical results presented here concern the simultaneous optimization of two general objective functions, and should extend beyond the setting of solving PDEs. A natural next step would be to combine more than two objectives, e.g. by solving multiple PDEs at once. 

\bibliographystyle{siam}
\bibliography{thebib}

\end{document}